\documentclass[12pt]{amsart}
\usepackage{latexsym,enumerate}
\usepackage{amssymb, xcolor,mathrsfs}
\usepackage{amsmath,amsthm,amsfonts,amssymb,latexsym, mathabx}
\usepackage[pagebackref,colorlinks]{hyperref}
\usepackage[utf8]{inputenc}
\usepackage{arydshln}

\headheight=7pt \textheight=574pt \textwidth=432pt \topmargin=14pt
\oddsidemargin=18pt \evensidemargin=18pt

\newtheorem{theorem}{Theorem}[section]

\newtheorem{proposition}[theorem]{Proposition}

\newtheorem{conjecture}[theorem]{Conjecture}

\newtheorem{question}[theorem]{Question}

\theoremstyle{definition}

\numberwithin{equation}{section}

\newcommand{\Aut}{{\mathrm {Aut}}}

\newcommand{\Irr}{{\mathrm {Irr}}}

\newcommand{\Syl}{{\mathrm {Syl}}}

\newcommand{\Gal}{{\rm Gal}}

\newcommand{\Char}{{\mathrm {Char}}}

\newcommand{\QQ}{{\mathbb Q}}
\newcommand{\ZZ}{{\mathbb Z}}

\newcommand{\EE}{{\mathbb E}}

\newcommand{\FF}{{\mathbb F}}

\newcommand{\height}{\mathbf{ht}}
\newcommand{\lev}{\mathrm{\mathbf{lev}}}

\newcommand{\bC}{{\mathbf{C}}}

\newcommand{\bN}{{\mathbf{N}}}

\newcommand{\Al}{\textup{\textsf{A}}}

\makeatletter \@namedef{subjclassname@2020}{\textup{2020}
Mathematics Subject Classification} \makeatother

\def\nor{\trianglelefteq\,}

\begin{document}

\title[Problems on character conductors]
{Problems on the conductor of finite group characters}

\author[Nguyen N. Hung]{Nguyen N. Hung}
\address{Department of Mathematics, The University of Akron, Akron,
OH 44325, USA}
\email{hungnguyen@uakron.edu}

\thanks{The author gratefully acknowledges support from the AMS--Simons Research
Enhancement Grant (AWD-000167 AMS). He also thanks Gunter Malle,
Gabriel Navarro, Mandi Schaeffer Fry, and Pham Huu Tiep for their
careful reading and many helpful comments on an earlier version of
the manuscript. Finally, the author thanks the anonymous referee for
several constructive comments and suggestions.}

\subjclass[2020]{Primary 20C15, 20C20; Secondary 20D10, 20D20,
20C33}
\keywords{Character, conductor, cyclotomic deficiency, field of
values, Feit's conjecture, the McKay-Navarro conjecture}


\thanks{}

\begin{abstract} This paper reviews recent results and open problems on the conductor of finite group characters,
highlighting their connections to one another and to broader topics
in the representation theory of finite groups.
\end{abstract}

\maketitle

\tableofcontents


\section{Introduction}

Studying character values is a fundamental yet challenging problem
in the representation theory of finite groups. For example, it has
long been known that a finite group has a nontrivial real-valued
irreducible character if and only if its order is even. However, the
analogous statement with ``rational" replacing ``real" was only
proved by using the classification of finite simple groups
\cite{Navarro-Tiep08}. The well-known \emph{rational groups} --
those whose characters are all rational-valued -- have been studied
for decades, but their complete structure remains far from fully
understood, see \cite{Thompson08} and the references therein.

In recent years, the study of character values has regained interest
due to significant progress on several fundamental problems in the
field. These include Feit's conjecture \cite{Feit80}, the
McKay-Navarro conjecture and its blockwise refinement
\cite{Navarro04}, and the determination of the field of values of
irreducible characters of degree not divisible by a prime $p$
\cite{Navarro-Tiep21}, among others.

This paper discusses recent results and open problems related to the
\emph{conductor} of complex irreducible characters of finite groups.
Given a character $\chi$ of a finite group $G$, let $\QQ(\chi)$
denote its field of values -- the smallest field extension of $\QQ$
containing all the values of $\chi$. Since each value $\chi(g)$, for
$g\in G$, is a sum of roots of unity, the field of values of a
character is always an abelian number field. The conductor of
$\chi$, denoted $c(\chi)$, is defined as the smallest positive
integer $n$ such that $\QQ(\chi)$ is contained in the $n$-th
cyclotomic field $\QQ(e^{2\pi i/n})$. As an important invariant, the
conductor provides key insights into the rationality/irrationality,
as well as $p$-rationality, of character values.

The role of the character conductor can be explicit, as in Feit's
conjecture mentioned above, but it is often more subtle. For
instance, in their seminal work \cite{Navarro-Tiep21}, G.~Navarro
and P.\,H.~Tiep proved that an abelian number field $\FF$ can arise
as the field of values of an odd-degree irreducible character if and
only if $\FF\supseteq \QQ(e^{2\pi i/{c(\FF)_2}})$, where $c(\FF)_2$
is the $2$-part of the conductor $c(\FF)$ of $\FF$. As we will see,
the conductor also plays a central role in classifying possible
fields of values of $p'$-degree characters, height zero characters,
$p$-Sylow restrictions, and characters of $p'$-order groups, as well
as in determining the $p$-rationality of a character via local
subgroups.

We have two main goals. The first is to collect and review a number
of results and open problems on character conductors, which are
scattered across the literature. The second is to highlight their
connections to one another and to broader topics in the field. We
hope that this review will stimulate further interest in these
problems and, more generally, in the study of character conductors.


\section{Feit's conjecture}\label{sec:conductor}

Throughout, let $p$ be a prime and let $G$ be a finite group. As
usual, we write $\Irr(G)$ for the set of all (complex) irreducible
characters of $G$, and $\Irr_{p'}(G)$ for the subset consisting of
those characters whose degrees are not divisible by $p$. We use
$\Char(G)$ for the set of all characters of $G$. For $n\in\ZZ^+$,
let $\QQ_n$ denote the $n$th cyclotomic field $\QQ(e^{2\pi i/n})$,
and let $\QQ^{ab}$ be the extension of $\QQ$ obtained by adjoining
all roots of unity. If $n\in\ZZ^+$, we denote by $n_p$ the largest
power of $p$ dividing $n$ and set $n_{p'}:=n/n_p$. The $p$-adic
valuation is $\nu_p(n):=\log_p(n_p)$.

Recall that if $\mathbb{F}$ is a finite abelian extension of
$\mathbb{Q}$, its conductor, denoted $c(\mathbb{F})$, is the
smallest positive integer $n$ such that $\mathbb{F} \subseteq
\mathbb{Q}_n$. The conductor $c(\chi)$ of a character $\chi$ is
therefore the conductor of its field of values $\mathbb{Q}(\chi)$;
that is, $c(\chi) := c(\mathbb{Q}(\chi))$. Furthermore, if $z$ is a
sum of roots of unity, we let $c(z)$ denote the conductor of the
smallest field containing $z$.

A basic question regarding the character conductor is: what specific
group-theoretic properties determine the conductors of irreducible
characters of a given group? This problem is quite challenging, even
when restricted to certain families of groups. Among groups that are
structurally close to simple, some, like symmetric groups, are
rational, while classical groups tend to be highly irrational.

Let $\chi\in\Irr(G)$. It is easy to see that, for every $g\in G$,
$\chi(g)$ is a sum of $|g|$-th roots of unity, and it follows that
$\QQ(\chi)\subseteq \QQ_{\exp(G)}$, where $\exp(G)$ denotes the
exponent of $G$. Therefore, $c(\chi)$ is always a divisor of
$\exp(G)$.

Earlier work of H.~Blichfeldt \cite{Bl04}, W.~Burnside \cite{Bu11},
and R.~Brauer \cite{Brauer64} suggests that the conductors of
irreducible characters of a finite group may in fact be tightly
controlled by the orders of group elements. For instance, Brauer
showed that if $m$ is the product of those prime-power factors
$p_i^{a_i}$ of $c(\chi)$ with $a_i>1$, then $G$ has an element of
order $m$ (see \cite[Theorem~3.10]{Navarro18}). These results
motivated a remarkable conjecture of W.~Feit \cite{Feit80}.

\begin{conjecture}[Feit]\label{conj:Feit}
Let $\chi$ be an irreducible character of a finite group $G$. Then
$G$ contains an element of order $c(\chi)$.
\end{conjecture}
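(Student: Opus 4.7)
The strategy is to attack Feit's conjecture through the classification of finite simple groups, since the conductor is a subtle invariant that resists purely elementary treatment beyond Brauer's theorem recalled above. Fix $\chi \in \Irr(G)$ and set $n := c(\chi) = \prod_{p} p^{a_p}$. Brauer's result produces an element whose order is the product of the prime powers $p^{a_p}$ with $a_p \geq 2$; the residual task is to account for the primes with $a_p = 1$ and to assemble everything into a single cyclic subgroup of order $n$. I would first attempt a reduction to a minimal counterexample $(G,\chi)$ via Clifford theory: if $N \trianglelefteq G$ and $\theta \in \Irr(N)$ lies under $\chi$, the conductors $c(\chi)$ and $c(\theta)$ are linked through the inertia subgroup and the cocycle controlling extension, and in a minimal counterexample one expects these constraints to concentrate the conductor on a single nonabelian composition factor. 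Combined with the (expected) validity of the conjecture for $p$-solvable groups, this should leave $G$ almost simple.

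At that point CFSG takes over. For $G = \Al_n$ or $\Sy_n$, characters are indexed by partitions and the Murnaghan--Nakayama rule gives an explicit description of $\QQ(\chi)$; the conductor should equal the $\lcm$ of a collection of positive integers whose sum is at most $n$, which realizes $c(\chi)$ as the order of an actual element of $\Sy_n$. For sporadic groups one inspects the ATLAS directly. The main case is groups of Lie type $G(\FQ)$: here Deligne--Lusztig theory parametrizes $\Irr(G)$ by pairs $(s,\psi)$ with $s$ semisimple in the dual group $G^{*}(\FQ)$ and $\psi$ a unipotent character of $\Centralizer_{G^{*}}(s)$. The field $\QQ(\chi)$ is controlled by the rational class of $s$, by Gauss sums, and by Frobenius eigenvalues on the unipotent support, so one expects the sought element of order $n$ to come from a regular semisimple element of a maximal torus matching $s$, together with a suitable $p$-element inside the centralizer.

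\emph{Main obstacle.} The hardest point, in my judgment, is the Lie type case in bad and defining characteristic, where unipotent contributions and Gauss sums can enlarge $c(\chi)$ beyond the natural torus order, and where producing an element of order \emph{exactly} $n$ (rather than merely dividing $n$) requires a precise grip on the rational structure of maximal tori and on the Jordan decomposition of characters at small primes. A second, more conceptual obstacle is the assembly step: even granting, for each prime $p \mid n$, the existence of a $p$-element of order $p^{a_p}$, producing commuting representatives whose product has order exactly $n$ seems to demand genuine use of the character-theoretic hypothesis beyond local group theory, and is the reason one cannot work prime by prime in isolation.
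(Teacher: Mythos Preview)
This statement is Feit's \emph{conjecture}; the paper does not prove it but surveys its status, so there is no proof in the paper against which to match your attempt. What the paper does report is the Boltje--Kleshchev--Navarro--Tiep reduction, and your proposal diverges from that in a way worth naming.

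Your plan---minimal counterexample, Clifford theory to descend to an almost simple group, then CFSG case analysis---is the natural first instinct, but this direct route has not been made to work, and the paper explains why the successful approach looks different. The BKNT breakthrough was to \emph{strengthen} the statement before attempting any reduction: their Conjecture~\ref{conj:BKNT24} replaces ``$G$ has an element of order $c(\chi)$'' by ``for every nonlinear primitive $\chi$ there exist a self-normalizing proper subgroup $H<G$ and $\psi\in\Irr(H)$ with $\QQ(\chi)=\QQ(\psi)$''. It is this statement about fields of values of characters of subgroups, packaged into an \emph{inductive Feit condition} on the universal cover $X$ of each simple group (demanding a matching $\mu\in\Irr(U)$ for a self-normalizing $U<X$ with the same cohomology class and the same behaviour under $\Aut(X)_U\times\Gal(\QQ^{ab}/\QQ)$), that admits a genuine reduction theorem. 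Element orders are too coarse to transport through extensions and quotients; this is precisely the ``assembly step'' obstacle you flag at the end, and BKNT circumvent it by never leaving the category of characters and their value fields.

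On the simple-group side your outline is reasonable in spirit, but the target is not quite what you describe: one must verify the inductive Feit condition, not merely locate an element of the right order, and the extra equivariance requirements are substantial. That condition is presently confirmed only for alternating groups, sporadic groups, and some low-rank Lie-type families; the general Lie-type case remains open. So you have correctly located where the difficulty lives, but both the reduction mechanism and the exact shape of the residual simple-group problem differ from your sketch.
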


Feit's conjecture was proved for solvable groups independently by
G.~Amit and D.~Chillag \cite{Amit-Chillag86} and P.\,A.~Ferguson and
A.~Turull \cite{Ferguson-Turull86}. In fact, a slightly stronger
result was achieved in \cite{Ferguson-Turull86}: if $\chi\in\Irr(G)$
and $G$ is $\pi$-solvable, where $\pi$ is the set of prime divisors
of $\chi(1)$, then $G$ contains an element of order $c(\chi)$.
Moreover, the conjecture holds when the non-abelian composition
factors of $G$ are minimal simple groups \cite{Ferguson-Turull87},
or when the degree of $\chi$ is prime \cite{HTZ}.

In \cite{V16}, C.~Vallejo proved a local strengthening of Feit's
conjecture for odd-degree characters of solvable groups: let $p$ be
a prime, $G$ a solvable group, and $\chi\in\Irr(G)$ with $\chi(1)$
odd and coprime to $p$, then for $P \in \Syl_p(G)$, the quotient
$\bN_G(P)/P'$ contains an element of order $c(\chi)$. This
strengthening, however, fails in general for characters of even
degree or for non-solvable groups.

There are also earlier results on the character conductor that are
not directly related to Feit's conjecture. Recall that $c(\chi)$
always divides the exponent of the group. Another noteworthy
divisibility property is that $c(\chi)$ divides $|G|/\chi(1)$ -- a
conjecture originally due to J.\,H.~Conway and later proved by Feit
\cite{Feit82}. Vallejo \cite{V23} introduced the notion of
\emph{relative conductor} in the setting where the character in
question restricts homogeneously to a normal subgroup and
established a divisibility relation between this relative conductor
and the exponent of the corresponding quotient group. In
\cite{Moreto}, A.~Moret\'{o} proved that the order of a finite group
is bounded from above in terms of the maximal multiplicity of the
conductors of its irreducible characters.

Recently, R.~Boltje, A.\,S.~Kleshchev, G.~Navarro, and P.\,H.~Tiep
have made a breakthrough on Feit's conjecture by reducing it to a
question about finite simple groups. Their key insight was to work
with a stronger formulation involving characters of subgroups,
rather than focusing on element orders. For the next conjecture, we
recall that an irreducible character $\chi$ of $G$ is termed
\emph{primitive} if it is not induced from a character of a proper
subgroup of $G$.

\begin{conjecture}[Boltje-Kleshchev-Navarro-Tiep]\label{conj:BKNT24}
Let $G$ be a finite group and $\chi\in\Irr(G)$. Suppose that $\chi$
is nonlinear and primitive. Then there exist a self-normalizing
proper subgroup $H< G$ and $\psi\in\Irr(H)$ such that $\QQ(\chi)=
\QQ(\psi)$.
\end{conjecture}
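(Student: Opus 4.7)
The plan is to proceed by induction on $|G|$, first reducing through Clifford-theoretic arguments to the case where $G$ is almost simple, and then verifying the statement for each family of almost simple groups via the classification of finite simple groups.

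For the reduction, let $G$ be a minimal counterexample and let $N$ be a minimal normal subgroup of $G$. Primitivity of $\chi$ forces $\chi_N = e\theta$ for a $G$-invariant $\theta \in \Irr(N)$, so in particular $\QQ(\theta) \subseteq \QQ(\chi)$. Standard character-triple isomorphisms allow us to control the kernel and assume that the socle of $G$ is either central (in which case $G$ is quasi-simple modulo its center) or a direct product $S^k$ of isomorphic nonabelian simple groups. When $k\ge 2$, a product-action argument on the wreath-product structure either produces a proper self-normalizing subgroup inheriting the full field $\QQ(\chi)$ from the coordinate components, or reduces the problem to a single factor. The residual core case is thus that $G$ is almost simple.

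At the almost simple level, the natural candidates for $H$ are Sylow normalizers $\bN_G(P)$ (which are self-normalizing by the iterated Frattini argument) and, in the Lie-type setting, normalizers $\bN_{\bG^F}(\bT)$ of maximal tori, or normalizers of proper Levi subgroups. For each nonlinear primitive $\chi \in \Irr(G)$, the goal is to realize $\QQ(\chi)$ exactly as $\QQ(\psi)$ for some $\psi \in \Irr(H)$. Alternating and symmetric groups can be handled by partition combinatorics together with the rationality of $\Sym_n$; sporadic groups by direct inspection of character tables. For groups of Lie type, Deligne--Lusztig theory controls the field of values: $\QQ(\chi)$ is determined by the Galois action on the rational semisimple class of the dual element $s \in \bG^*$ and on the characters of the relative Weyl group $W_{\bG^*}(s)$ of $\Centralizer_{\bG^*}(s)$, and one then seeks to realize this field inside the character theory of $\bN_{\bG^F}(\bT)$ for a suitably chosen torus.

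The main obstacle will be the Lie-type case. One must match fields of values \emph{exactly}, not merely up to inclusion, and one must ensure that the candidate $H$ is genuinely self-normalizing in the ambient almost simple group, not only in the underlying simple one; so the interplay of diagonal, field, and graph automorphisms with Lusztig series must be tracked throughout. Primitive characters lying in cuspidal series attached to non-split tori, exceptional behavior in classical groups admitting nontrivial graph automorphisms, and a handful of small-rank exceptional groups are likely to require ad hoc constructions using normalizers of extraspecial subgroups or maximal subgroups of Aschbacher class $\mathcal{S}$. As a by-product, once Conjecture~\ref{conj:BKNT24} is established one recovers Feit's Conjecture~\ref{conj:Feit} by induction: applied to a primitive inducing character, it produces a proper self-normalizing $H$ with $c(\psi)=c(\chi)$, and the inductive hypothesis on $H$ then yields an element of the required order in $G$.
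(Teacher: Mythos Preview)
The statement you are attempting to prove is presented in the paper as an \emph{open conjecture}; the paper gives no proof of it. What the paper reports is that Boltje, Kleshchev, Navarro, and Tiep have reduced Conjecture~\ref{conj:BKNT24} to an ``inductive Feit condition'' on simple groups, and that this condition has so far been verified only for alternating groups, sporadic groups, and a few low-rank Lie-type families. So there is no paper proof against which to compare your proposal, and your proposal cannot be a complete proof, because the conjecture is not known.

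Your outlined strategy resembles the known reduction in broad shape but misses its substantive content. The actual BKNT reduction does not simply pass to almost simple groups and then ask for some $\psi$ with $\QQ(\psi)=\QQ(\chi)$. Rather, for the universal covering group $X$ of each simple group and each $\chi\in\Irr(X)$, one must exhibit a self-normalizing proper $U<X$ and $\mu\in\Irr(U)$ such that $\chi$ and $\mu$ share the same cohomology class and behave identically under the action of $\Aut(X)_U\times\Gal(\QQ^{ab}/\QQ)$. These equivariance and cohomological constraints are precisely what make the induction propagate through normal series; your character-triple and product-action gestures do not supply them, and a bare field-of-values match at the almost-simple layer is not known to suffice for the reduction.

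On the simple-group side, your remarks about Deligne--Lusztig theory, torus normalizers, and the interference of graph automorphisms correctly locate where the problem is hard, but they are a description of the obstacle rather than a method for overcoming it. This is exactly the part that remains open in the literature: producing an exact field-of-values match, with full $\Aut\times\Gal$-compatibility, across all Lusztig series of all finite groups of Lie type. Until that is carried out, the conjecture stands.
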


Conjecture~\ref{conj:BKNT24} can be reduced to the so-called
\emph{inductive Feit condition} which must be verified for all
finite simple groups. This reduction builds on earlier successful
reductions for local-global conjectures (some of these are discussed
in Section~\ref{sec:p-rationality}). As summarized in
\cite[\S1]{BKNT24}, one has to show that, for the universal covering
group $X$ of each finite simple group and an irreducible character
$\chi$ of $X$, there exists a self-normalizing proper subgroup $U$
of $X$ and an irreducible character $\mu$ of $U$ such that $\chi$
and $\mu$ have the same cohomology class and exhibit the same
behavior under the action of $\Aut(X)_U \times \Gal(\QQ^{ab}/\QQ)$,
where $\Aut(X)_U$ denotes the group of automorphisms of $X$ that
stabilize $U$. We refer the interested reader to \cite[\S3]{BKNT24}
for the precise definition.

Theorem~4.7 of \cite{BKNT24} shows that if every nonabelian
composition factor of $G$ satisfies the inductive Feit condition,
then Conjecture~\ref{conj:BKNT24} holds for $G$. Consequently, one
has the following.

\begin{theorem}[\cite{BKNT24}, Theorem 4.8]
Let $G$ be a finite group. Assume that the simple groups involved in
$G$ satisfy the inductive Feit condition. Then Feit's
Conjecture~\ref{conj:Feit} holds for every irreducible character of
$G$.
\end{theorem}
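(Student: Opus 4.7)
The plan is to prove this by induction on $|G|$, invoking Theorem~4.7 of \cite{BKNT24} (which, under our hypothesis, delivers Conjecture~\ref{conj:BKNT24} for $G$) to handle the substantive case. The induction is legitimate because every simple group involved in a subgroup $H \leq G$ is also involved in $G$: indeed, every section of $H$ is a section of $G$, so the inductive Feit hypothesis descends to every proper subgroup encountered in the argument.

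Fix $\chi \in \Irr(G)$ and write $c := c(\chi)$. I would break the argument into three cases according to the structure of $\chi$. If $\chi$ is linear, then $\chi$ has order $c$ and factors through the cyclic quotient $G/\ker\chi$ of order $c$; any preimage of a generator is an element of $G$ whose order is a multiple of $c$, and an appropriate power has order exactly $c$. If $\chi$ is imprimitive, say $\chi = \Ind_H^G(\psi)$ with $H < G$ and $\psi \in \Irr(H)$, then the induced-character formula forces $\QQ(\chi) \subseteq \QQ(\psi)$, so $c \mid c(\psi)$; the inductive hypothesis applied to $H$ produces an element of order $c(\psi)$, whose $(c(\psi)/c)$-th power has order exactly $c$.

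The essential case is when $\chi$ is primitive and nonlinear. Here I would invoke Conjecture~\ref{conj:BKNT24}, available for $G$ by Theorem~4.7 of \cite{BKNT24}, to obtain a self-normalizing proper subgroup $H < G$ and $\psi \in \Irr(H)$ satisfying $\QQ(\chi) = \QQ(\psi)$, so that $c(\psi) = c$. The inductive hypothesis applied to $H$ then yields an element of order $c$ in $H$, hence in $G$, closing the induction.

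The core difficulty of the theorem lies entirely in this primitive nonlinear case, where one must manufacture a proper subgroup whose characters reproduce the field of values of $\chi$. This is precisely what the deep reduction of \cite{BKNT24} achieves, absorbing all of the assumed arithmetic information at the level of simple composition factors. Once that reduction is granted, the remainder of the proof is essentially bookkeeping; the only minor subtlety is the passage from an element whose order is a multiple of $c$ to one of order exactly $c$, which is routine since any cyclic group of order $kc$ contains a subgroup of order $c$.
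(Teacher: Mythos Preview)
Your argument is correct and is precisely the natural deduction that the paper has in mind: the survey does not spell out a proof here but simply notes that the theorem follows ``consequently'' from Theorem~4.7 of \cite{BKNT24}, and your induction on $|G|$---reducing via imprimitivity or via Conjecture~\ref{conj:BKNT24} to a proper subgroup, where the hypothesis descends because simple sections of subgroups are simple sections of $G$---is exactly how one unpacks that word. The same descent-to-a-linear-character idea appears in the paper's proof of Proposition~\ref{prop:1}, so your approach matches the paper's intended reasoning.
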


\noindent (Recall that a simple group \(S\) is said to be
\emph{involved} in a finite group \(G\) if \(S\) occurs as a
subquotient of \(G\); that is, if there exist \(H \le G\) and \(N
\trianglelefteq H\) such that $H/N \cong S$.) The inductive Feit
condition has been verified for the alternating groups, the sporadic
groups, and several families of low-rank groups of Lie type (see
\cite[\S5 \& \S8]{BKNT24} and \cite{Tapp25}). As a consequence,
Feit's conjecture is now known for all finite groups with abelian
Sylow $2$-subgroups.

Since \(c\bigl(\chi(g)\bigr)\mid |g|\), the conductor
\(c\bigl(\chi(g)\bigr)\) is equal to the order of a suitable power
of \(g\). This leads naturally to the following strengthening of
Feit's conjecture: does there exist \(g\in G\) such that $
c(\chi)=c\bigl(\chi(g)\bigr)? $ Equivalently, does there exist
\(g\in G\) such that $ c\bigl(\chi(g)\bigr) $ is divisible by
\(c\bigl(\chi(x)\bigr)\) for every \(x\in G\)? As noted in
\cite[\S9]{BKNT24}, this stronger statement is false in general.
However, known counterexamples seem to be rare and, so far, none are
known for quasi-primitive characters, including all irreducible
characters of quasisimple groups. Here $\chi\in\Irr(G)$ is
\emph{quasi-primitive} if for every normal subgroup $N\nor G$, the
restriction $\chi_N$ is a multiple of an irreducible character. In
recent work \cite{HH26}, Herbig and the author verified this
property for several low-rank families of quasisimple groups of Lie
type.

We end this section with a generalization -- perhaps a more natural
version -- of Feit's conjecture.

\begin{conjecture}[Boltje-Kleshchev-Navarro-Tiep]\label{conj:aaa}
Suppose that $G$ is a finite group and $\chi\in \Irr(G)$. Then there
is a subgroup $H$ of $G$ and a linear constituent $\lambda\in
\Irr(H)$ of the restriction $\chi_H$ such that the conductor of
$\chi$ is the order of $\lambda$.
\end{conjecture}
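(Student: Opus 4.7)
The plan is to induct on $|G|$, treating the imprimitive and primitive cases separately; the base cases $|G|=1$ and $\chi$ linear are immediate with $H=G$ and $\lambda=\chi$, noting that for a linear character one has $|\chi|=c(\chi)$.

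In the imprimitive case, write $\chi=\psi^G$ for some $\psi\in\Irr(K)$ with $K<G$. Since $\QQ(\chi)\subseteq\QQ(\psi)$, we have $c(\chi)\mid c(\psi)$. The inductive hypothesis applied to $(K,\psi)$ yields a subgroup $H\le K$ and a linear constituent $\mu\in\Irr(H)$ of $\psi_H$ with $|\mu|=c(\psi)$. By Frobenius reciprocity $\psi$ is a constituent of $\mu^K$, hence $\chi=\psi^G$ is a constituent of $\mu^G$, and a further application of Frobenius reciprocity gives $\mu$ as a constituent of $\chi_H$. Since $\mu(H)$ is cyclic of order $c(\psi)$, a multiple of $c(\chi)$, set $H':=\mu^{-1}(M)$ for the unique subgroup $M\le\mu(H)$ of order $c(\chi)$; then $\mu_{H'}$ is a linear constituent of $\chi_{H'}$ of order exactly $c(\chi)$, closing the imprimitive case.

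For the primitive case I may further reduce to $\chi$ faithful by passing to $G/\ker\chi$, which preserves $c(\chi)$ and under which linear constituents inflate to linear constituents of $\chi_H$ of the same order. With $\chi$ faithful, primitive, and nonlinear, Conjecture~\ref{conj:BKNT24} supplies a self-normalizing proper subgroup $K<G$ and $\psi\in\Irr(K)$ with $\QQ(\psi)=\QQ(\chi)$, so $c(\psi)=c(\chi)$. Induction on $(K,\psi)$ yields $H\le K$ and a linear constituent $\mu$ of $\psi_H$ with $|\mu|=c(\chi)$. The principal obstacle is that Conjecture~\ref{conj:BKNT24} identifies only fields of values, not Clifford-theoretic data: $\psi$ need not be a constituent of $\chi_K$, and consequently $\mu$ need not be a constituent of $\chi_H$.

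To close the gap I would aim to establish the following refinement: \emph{for primitive nonlinear $\chi$, there exist a proper subgroup $K<G$ and an irreducible constituent $\alpha$ of $\chi_K$ with $\QQ(\chi)\subseteq\QQ(\alpha)$.} Granted this, the inductive hypothesis applied to $(K,\alpha)$ produces a linear constituent $\mu$ of $\alpha_H$ of order $c(\alpha)$; since $\mu\le\alpha_H\le\chi_H$ and $c(\chi)\mid c(\alpha)$, restricting $\mu$ to $H'$ as in the imprimitive case supplies a linear constituent of $\chi_{H'}$ of order $c(\chi)$. I expect establishing this refinement to be the hard part: it would likely require strengthening the inductive Feit condition of \cite{BKNT24} to carry Clifford-theoretic information and then verifying the stronger condition for the universal covers of finite simple groups. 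As corroboration, the prime-power case $c(\chi)=p^a$ is unconditional: Feit's Conjecture~\ref{conj:Feit} and faithfulness of $\chi$ force $\rho(g)$ to have order $p^a$ in $\GL$ for $g\in G$ of order $p^a$, hence to admit a primitive $p^a$-th root of unity as eigenvalue (since for prime powers the lcm of the orders of the eigenvalues equals their maximum), giving a faithful linear constituent of $\chi_{\langle g\rangle}$ of order $c(\chi)$.
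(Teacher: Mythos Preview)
The statement you are attempting is Conjecture~\ref{conj:aaa}, which the paper explicitly records as \emph{open in general, even for simple groups}; there is no proof in the paper to compare against. Your inductive framework and the treatment of the imprimitive case are sound, and you have correctly isolated the genuine obstruction in the primitive case: Conjecture~\ref{conj:BKNT24} produces $\psi\in\Irr(K)$ with $\QQ(\psi)=\QQ(\chi)$ but carries no information that $\psi$ (or any character with the right field) occurs as a constituent of $\chi_K$, and without that the linear character you build downstairs need not appear in $\chi_H$.

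The refinement you propose --- a proper $K<G$ and an irreducible constituent $\alpha$ of $\chi_K$ with $\QQ(\chi)\subseteq\QQ(\alpha)$ --- is not a lemma one can hope to extract from the existing inductive Feit machinery; it is essentially equivalent to Conjecture~\ref{conj:aaa} itself (iterating it down to a linear character already yields the conjecture, and conversely the conjecture for nonlinear $\chi$ supplies such a pair with $K=H$, $\alpha=\lambda$). So the gap you name is real and is the whole difficulty. Two side remarks: your primitive step also rests on Conjecture~\ref{conj:BKNT24}, which is itself open, so even granting your refinement the argument would remain conditional; and your ``unconditional'' prime-power corroboration needlessly invokes Feit's conjecture --- since $c(\chi)\mid\exp(G)$ and $c(\chi)=p^a$ is a prime power, $G$ already contains an element of order $p^a$, after which your eigenvalue argument goes through as written.
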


Note that the order $o(\lambda)$ of such a linear character
$\lambda$ divides $|H/H'|$. Hence $H/H'$ contains an element of
order $o(\lambda)$, which in turn implies that $H$ contains an
element of order $o(\lambda)$. Therefore, Conjecture~\ref{conj:aaa}
indeed implies Conjecture~\ref{conj:Feit}.

Conjecture~\ref{conj:aaa} is known in the case of solvable groups,
by \cite[Theorem~9.4]{BKNT24}, but open in general, even for simple
groups.


\section{The cyclotomic deficiency}\label{sec:cyclotomic-deficiency}

The Galois group $\mathcal{G}:=\Gal(\QQ^{ab}/\QQ)$ of $\QQ^{ab}$,
the smallest extension of $\QQ$ containing all roots of unity,
naturally acts on $\Irr(G)$ by $\chi^\sigma(g):=\chi(g)^\sigma$ for
every $\sigma\in \mathcal{G}$, $\chi\in\Irr(G)$, and $g\in G$.
Studying the sizes of $\mathcal{G}$-orbits is a fundamental problem
in group representation theory. The orbit size of $\chi$ is nothing
more than the order of the Galois group $\Gal(\QQ(\chi)/\QQ)$. It is
linked to the conductor of $\chi$ via
\[
|\Gal(\QQ(\chi)/\QQ)|=\frac{\varphi(c(\chi))}{[\QQ_{c(\chi)}:\QQ(\chi)]},
\]
where $\varphi$ is Euler's totient function (since
$[\QQ_{c(\chi)}:\QQ]=\varphi(c(\chi))$). The degree of the field
extension $\QQ_{c(\chi)}/\QQ(\chi)$ measures how far $\QQ(\chi)$ is
from its cyclotomic closure and often referred to as the
\emph{cyclotomic deficiency} of $\chi$.

In 1988, G.\,M.~Cram \cite{Cram88} proved, using
Gajendragadkar-Isaacs's theory of $p$-special characters of
$p$-solvable groups (see \cite[Chapter~2]{I18}), that if
$\chi\in\Irr(G)$ where $G$ is \emph{solvable}, then
$$[\QQ_{c(\chi)}:\QQ(\chi)] \text{ divides } \chi(1).$$ This divisibility,
unfortunately, no longer holds in arbitrary non-solvable groups.
(For example, the irreducible characters of degree 3 of $\Al_5$ have
field of values $\QQ(\sqrt{5})$ and cyclotomic deficiency $2$.) Our
work in \cite{Hung-Tiep23} suggests the following.

\begin{conjecture}[the cyclotomic deficiency conjecture]\label{conj:Hung-Tiep}
Let $\chi$ be an irreducible character of a finite group $G$. Then
\[[\QQ_{c(\chi)}:\QQ(\chi)]\leq \chi(1).\]
\end{conjecture}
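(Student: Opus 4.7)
The plan is to follow the now-standard strategy for such Galois-theoretic problems: reduce Conjecture~\ref{conj:Hung-Tiep} to a statement about faithful characters of quasi-simple groups via Clifford theory, and then verify the bound family by family for the finite simple groups. The first step is a reformulation. Writing $\mathcal{G}_\chi := \Gal(\QQ_{c(\chi)}/\QQ(\chi))$, the inequality $[\QQ_{c(\chi)}:\QQ(\chi)] \leq \chi(1)$ is the statement $|\mathcal{G}_\chi| \leq \chi(1)$; that is, the Galois stabilizer of $\chi$ inside $(\ZZ/c(\chi)\ZZ)^\times$ has size at most the character degree. This form is convenient because Clifford theory controls the growth of Galois orbits under induction, whereas bounds on conductors themselves behave less cleanly.

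For the Clifford reduction, I would take a minimal counterexample $(G,\chi)$. If $N \trianglelefteq G$ is proper and nontrivial with $\theta \in \Irr(N)$ lying under $\chi$ and inertia group $T = G_\theta$, let $\psi \in \Irr(T \mid \theta)$ be the Clifford correspondent, so $\chi = \psi^G$ and $\chi(1) = [G:T]\,\psi(1)$. Since $\QQ(\chi) \subseteq \QQ(\psi) \subseteq \QQ_{c(\psi)}$, we have $c(\chi) \mid c(\psi)$, and by minimality $[\QQ_{c(\psi)}:\QQ(\psi)] \leq \psi(1)$. The reduction then hinges on the inequality $[\QQ(\psi):\QQ(\chi)] \leq [G:T]$, which should follow from the fact that the $\Gal(\QQ(\psi)/\QQ(\chi))$-orbit of $\psi$ embeds into its $G$-orbit via a Brauer--Speiser-type argument. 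The case $T = G$, in which $\theta$ is $G$-invariant, must be handled separately via character-triple isomorphisms and Galois-equivariant projective descent. Iterating reduces the problem to faithful irreducible characters of quasi-simple groups.

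For quasi-simple groups, the case analysis splits into familiar families. Alternating groups and their covers have characters whose field of values lies in an at-most quadratic (spin: biquadratic) extension of $\QQ$, so the cyclotomic deficiency is bounded by a small constant and the conjecture follows from standard hook-length lower bounds on $\chi(1)$. Sporadic groups are verified directly from the Atlas. For groups of Lie type, Lusztig's Jordan decomposition splits the Galois analysis into a semisimple component, controlled by the rational structure of the centralizer of the semisimple parameter, and a unipotent component, for which one combines Shintani descent with Lusztig's non-abelian Fourier transform on families. Recent advances, including \cite{Navarro-Tiep21} and subsequent work on Galois stabilizers of $p'$-degree characters, should supply most of the needed uniform inequalities.

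The main obstacle will be the unipotent characters of classical and exceptional groups of Lie type, particularly cuspidal ones in small rank. There the Galois stabilizer can be genuinely nontrivial, arising from the Fourier matrix attached to a Lusztig family, while the character degree need not be large enough to yield an automatic asymptotic bound. A uniform argument in this regime --- perhaps an explicit comparison between the size of the Galois stabilizer on a family and a Steinberg-type lower bound on the degree --- is the technical heart of the problem, leaving a finite list of low-rank exceptions to be handled directly.
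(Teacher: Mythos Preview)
The statement you are attempting to prove is presented in the paper as an \emph{open conjecture}, not a theorem; the paper offers no proof, only supporting evidence (Cram's divisibility for solvable groups, the alternating and symmetric groups, $\GL$ and $\GU$, and characters of prime degree, via \cite{HTZ}). So there is no proof in the paper to compare your proposal against, and a complete argument along your lines would amount to resolving an open problem.

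That said, your Clifford step for the imprimitive case is correct and is essentially the argument the paper gives immediately after Proposition~\ref{prop:1} (stated there for Conjecture~\ref{conj:stronger}): if $\chi=\psi^G$ with $\psi\in\Irr(H)$, $H<G$, then the Galois conjugates of $\psi$ over $\QQ(\chi)$ are distinct constituents of $\chi_H$, so $[\QQ(\psi):\QQ(\chi)]\le\chi(1)/\psi(1)$ and the induction goes through. The genuine gaps lie elsewhere. First, your reduction to quasi-simple groups is only a sketch: when $\chi$ is primitive and $\theta$ is $G$-invariant, ``Galois-equivariant projective descent'' via character triples does not straightforwardly control conductors, since twisting by a Schur-multiplier cocycle can alter fields of values in ways you have not bounded; no such reduction is recorded in the paper or, to my knowledge, in the literature. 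Second, and more seriously, the case-by-case verification for quasi-simple groups is not available: the results of \cite{Navarro-Tiep21} concern $p'$-degree characters and do not bound the full cyclotomic deficiency of an arbitrary irreducible, and the unipotent-character analysis you yourself flag as ``the technical heart'' has not been carried out. Your proposal is a plausible outline of how one might attack Conjecture~\ref{conj:Hung-Tiep}, but as written it is a program, not a proof.
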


\noindent Beyond Cram's result, further evidence in support of the
conjecture is known, including the cases of alternating and
symmetric groups, general linear and unitary groups, and irreducible
characters of prime degree; see \cite{HTZ}.

It is worth noting that the element-wise version of
Conjecture~\ref{conj:Hung-Tiep} -- namely, whether
$[\QQ_{c(\chi(g))}:\QQ(\chi(g))]\leq \chi(1)$ for $\chi\in\Irr(G)$
and $g\in G$ -- is true for degree-2 characters but fails in
general. This is a recent work by C.~Herbig \cite{He25}.

We find it interesting that Feit's conjecture and the cyclotomic
deficiency conjecture can be combined into a single statement, as
follows.

\begin{conjecture}\label{conj:stronger}
Let $G$ be a finite group and $\chi\in\Irr(G)$ be non-linear. Then
there exist a proper subgroup $H<G$ and $\psi\in\Irr(H)$ such that
\[
\QQ(\chi) \subseteq \QQ(\psi) \quad \text{and} \quad [\QQ(\psi) :
\QQ(\chi)] \le \frac{\chi(1)}{\psi(1)}.
\]
\end{conjecture}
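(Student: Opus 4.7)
The plan is to derive Conjecture~\ref{conj:stronger} by combining the two preceding conjectures: Conjecture~\ref{conj:aaa} (the Boltje--Kleshchev--Navarro--Tiep refinement of Feit) and Conjecture~\ref{conj:Hung-Tiep} (the cyclotomic deficiency conjecture). Granting both, apply Conjecture~\ref{conj:aaa} to obtain a subgroup $H\le G$ and a linear constituent $\lambda\in\Irr(H)$ of $\chi_H$ with $\ord(\lambda)=c(\chi)$. Since $\chi$ is nonlinear, $H=G$ would force $\chi=\lambda$ to be linear, so $H$ is proper. Setting $\psi:=\lambda$, one has $\QQ(\lambda)=\QQ_{c(\chi)}\supseteq\QQ(\chi)$, and Conjecture~\ref{conj:Hung-Tiep} yields
\[
[\QQ(\psi):\QQ(\chi)] \;=\; [\QQ_{c(\chi)}:\QQ(\chi)] \;\le\; \chi(1) \;=\; \frac{\chi(1)}{\psi(1)},
\]
the required bound. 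This already settles Conjecture~\ref{conj:stronger} unconditionally for solvable groups, where both ingredients hold by \cite[Theorem~9.4]{BKNT24} and Cram's theorem \cite{Cram88}; the same argument works for any class of groups in which the two ingredients are simultaneously verified.

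A more direct attack would proceed by induction on $|G|$, splitting on primitivity. In the imprimitive case $\chi=\mu^G$ with $\mu\in\Irr(K)$ and $K<G$, one automatically has $\chi(1)/\mu(1)=[G:K]$ and $\QQ(\chi)\subseteq\QQ(\mu)$, reducing the conclusion to the Clifford--Galois statement $[\QQ(\mu):\QQ(\chi)]\le[G:K]$, which is governed by the joint action of $G$ and $\Gal(\QQ^{ab}/\QQ)$ on the Galois orbit of $\mu$. In the primitive case, Conjecture~\ref{conj:BKNT24} supplies a self-normalizing proper subgroup $H$ with $\psi\in\Irr(H)$ satisfying $\QQ(\psi)=\QQ(\chi)$, reducing the desired inequality to the purely dimensional bound $\psi(1)\le\chi(1)$.

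The main obstacle is Conjecture~\ref{conj:Hung-Tiep} itself outside the solvable realm, and especially for quasi-simple groups. Whereas Conjecture~\ref{conj:aaa} has been recast, through the BKNT framework, as an inductive condition to be checked group by group on the simple list, no analogous reduction for the cyclotomic deficiency conjecture is presently available. Formulating such a reduction and then verifying it for the simple groups of Lie type --- via Deligne--Lusztig theory and the kind of explicit character-field computations carried out in \cite{Hung-Tiep23} and \cite{HTZ} --- is where the real difficulty concentrates. A secondary difficulty is that the BKNT reduction of Conjecture~\ref{conj:aaa}, as currently formulated, does not track character degrees, so securing the quantitative bound $\psi(1)\le\chi(1)$ in the primitive branch of the induction may require a strengthening of the inductive condition of \cite[\S3]{BKNT24}.
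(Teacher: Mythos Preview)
Your conditional argument is correct and follows essentially the same route as the paper's Proposition~\ref{prop:1}, with two differences worth noting. First, you invoke Conjecture~\ref{conj:aaa}, which is strictly stronger than needed: the paper shows that Feit's Conjecture~\ref{conj:Feit} alone (together with cyclotomic deficiency) suffices, by taking $H=\langle g\rangle$ for an element $g\in G$ of order $c(\chi)$ and $\psi$ any linear character of $H$ of order $c(\chi)$, so that $\QQ(\psi)=\QQ_{c(\chi)}$ and the bound follows from Conjecture~\ref{conj:Hung-Tiep} exactly as in your argument --- no constituent condition is required. Second, the paper establishes the full \emph{equivalence}: Conjecture~\ref{conj:stronger} also implies Feit (by iterating the statement down to a linear character of some subgroup) and implies cyclotomic deficiency (by induction on $|G|$). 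You argue only one direction, and from a stronger hypothesis, so what you obtain is weaker than Proposition~\ref{prop:1}.

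On your second paragraph: the imprimitive branch is not merely a reduction but is settled unconditionally in the paper. If $\chi=\psi^G$ with $\psi\in\Irr(H)$ and $H<G$, then every $\tau\in\Gal(\QQ(\psi)/\QQ(\chi))$ fixes $\chi_H$, so the $[\QQ(\psi):\QQ(\chi)]$ distinct Galois conjugates $\psi^\tau$ all occur as constituents of $\chi_H$ with the same multiplicity and degree; comparing degrees gives $\chi(1)\ge[\QQ(\psi):\QQ(\chi)]\,\psi(1)$ directly. Your primitive branch, by contrast, does not close: Conjecture~\ref{conj:BKNT24} gives $\QQ(\psi)=\QQ(\chi)$ but carries no information about $\psi(1)$, so the inequality $\psi(1)\le\chi(1)$ is an additional unproved input, as you acknowledge.
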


\begin{proposition}\label{prop:1}
Conjecture~\ref{conj:stronger} is equivalent to the combination of
the cyclotomic deficiency conjecture and Feit's conjecture.
\end{proposition}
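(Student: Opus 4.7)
The plan is to establish the equivalence by proving each implication directly, with the induction running over $|G|$ in one direction and an explicit cyclic-subgroup construction in the other.

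For the direction that Conjecture~\ref{conj:Feit} and Conjecture~\ref{conj:Hung-Tiep} together imply Conjecture~\ref{conj:stronger}, I would argue as follows. Given non-linear $\chi\in\Irr(G)$, the group $G$ is non-abelian, hence non-cyclic. By Feit's conjecture there is $g\in G$ of order $c(\chi)$; set $H:=\langle g\rangle$, a proper cyclic subgroup of $G$, and let $\psi\in\Irr(H)$ be any faithful linear character, so that $\QQ(\psi)=\QQ_{c(\chi)}$. The containment $\QQ(\chi)\subseteq\QQ_{c(\chi)}=\QQ(\psi)$ is built into the definition of the conductor, and the cyclotomic deficiency conjecture applied to $\chi$ gives $[\QQ(\psi):\QQ(\chi)]=[\QQ_{c(\chi)}:\QQ(\chi)]\le\chi(1)=\chi(1)/\psi(1)$, as required.

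For the reverse implication, I would run induction on $|G|$, deducing Conjectures~\ref{conj:Feit} and~\ref{conj:Hung-Tiep} simultaneously from Conjecture~\ref{conj:stronger}. The linear case is the base: then $\QQ(\chi)=\QQ_{c(\chi)}$, so cyclotomic deficiency is trivial, and any preimage in $G$ of a generator of the cyclic group $\chi(G)$ has order divisible by $c(\chi)=\ord(\chi)$, so a suitable power produces an element of $G$ of exactly that order. For non-linear $\chi$, Conjecture~\ref{conj:stronger} supplies $H<G$ and $\psi\in\Irr(H)$ with $\QQ(\chi)\subseteq\QQ(\psi)$ (hence $c(\chi)\mid c(\psi)$) and $[\QQ(\psi):\QQ(\chi)]\le\chi(1)/\psi(1)$. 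By the inductive hypothesis applied to $\psi\in\Irr(H)$, the subgroup $H$ contains an element of order $c(\psi)$, a suitable power of which has order $c(\chi)$; and using the tower $\QQ(\chi)\subseteq\QQ(\psi)\subseteq\QQ_{c(\psi)}$ together with $\QQ_{c(\chi)}\subseteq\QQ_{c(\psi)}$,
\[
[\QQ_{c(\chi)}:\QQ(\chi)]\le[\QQ_{c(\psi)}:\QQ(\chi)]=[\QQ_{c(\psi)}:\QQ(\psi)]\cdot[\QQ(\psi):\QQ(\chi)]\le\psi(1)\cdot\frac{\chi(1)}{\psi(1)}=\chi(1),
\]
closing the inductive step.

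I do not expect a genuine obstacle here: the content of the proposition is a clean repackaging. Feit's conjecture plays exactly the role of producing the cyclic subgroup $H=\langle g\rangle$ whose faithful linear character reaches $\QQ_{c(\chi)}$, while the cyclotomic deficiency conjecture supplies the degree bound. The only point that needs to be stated carefully is that non-linearity of $\chi$ is what forces $\langle g\rangle$ to be \emph{proper} in $G$, which is why Conjecture~\ref{conj:stronger} is formulated for non-linear characters and why the equivalence does not stumble at the boundary between linear and non-linear cases.
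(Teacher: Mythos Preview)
Your proof is correct and follows essentially the same approach as the paper: the converse direction via the cyclic subgroup $H=\langle g\rangle$ and a faithful linear $\psi$ is identical, and the forward direction is the same induction/tower argument, with the only cosmetic difference that you run Feit and the cyclotomic deficiency simultaneously while the paper treats them in two separate passes (induction for the deficiency bound, iterated application of Conjecture~\ref{conj:stronger} down to a linear character for Feit). One tiny slip: for linear $\chi$ you write $c(\chi)=\ord(\chi)$, which fails when $\ord(\chi)\equiv 2\pmod 4$, but since $c(\chi)\mid\ord(\chi)$ always, your conclusion that a suitable power of the preimage has order $c(\chi)$ is unaffected.
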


\begin{proof}
We first show that Conjecture~\ref{conj:stronger} implies the
cyclotomic deficiency conjecture. We proceed by induction on $|G|$.
Note that the cyclotomic deficiency conjecture is trivial for linear
characters, so we may assume that $\chi(1) > 1$. By hypothesis,
there exists $H < G$ and $\psi \in \Irr(H)$ such that $\QQ(\chi)
\subseteq \QQ(\psi)$ and $[\QQ(\psi) : \QQ(\chi)] \le
\chi(1)/\psi(1)$. It follows that
\[
[\QQ_{c(\chi)} : \QQ(\chi)] \le [\QQ_{c(\psi)} : \QQ(\chi)] =
[\QQ_{c(\psi)} : \QQ(\psi)] \, [\QQ(\psi) : \QQ(\chi)].
\]
By the induction hypothesis, we have $[\QQ_{c(\psi)} : \QQ(\psi)]
\le \psi(1)$. Hence,
\[
[\QQ_{c(\chi)} : \QQ(\chi)] \le \psi(1) \cdot
\frac{\chi(1)}{\psi(1)} = \chi(1),
\]
as desired.

To see that Conjecture~\ref{conj:stronger} also implies Feit's
conjecture, we apply Conjecture~\ref{conj:stronger} repeatedly to
obtain a linear character $\lambda$ of some subgroup $K$ of $G$ such
that
\[
\mathbb{Q}(\chi) \subseteq \mathbb{Q}(\lambda) =
\mathbb{Q}_{o(\lambda)}.
\]
Hence $c(\chi)$ divides $o(\lambda)$. Since $o(\lambda)$ equals the
order of some element of $K/K'$, it follows that $c(\chi)$ divides
the order of some element $x \in K$. Moreover, some power of $x$
will have order $c(\chi)$.

For the converse implication, let $g \in G$ be an element of order
$|g| = c(\chi) =: a$. If $G = \langle g \rangle$, then there is
nothing to prove. Otherwise, we may take $H = \langle g \rangle$ and
let $\psi$ be an irreducible character of $H$ of order $a$.
\end{proof}

Note that Conjecture~\ref{conj:stronger} is straightforward when
$\chi$ is \emph{imprimitive}; that is, when there exists a subgroup
$H < G$ and $\psi \in \Irr(H)$ such that $\chi = \psi^G$. For every
$\tau \in \Gal(\QQ(\psi)/\QQ(\chi))$, the restriction $\chi_H$ is
$\tau$-invariant, and hence $[\chi_H, \psi^\tau] = [\chi_H, \psi]$.
In other words, $\psi^\tau$ is also an irreducible constituent of
$\chi_H$ for each $\tau \in \Gal(\QQ(\psi)/\QQ(\chi))$. Since there
are exactly $[\QQ(\psi) : \QQ(\chi)]$ distinct
$\Gal(\QQ(\psi)/\QQ(\chi))$-conjugates of $\psi$, all of which have
the same degree as $\psi$, it follows that $\chi(1) \geq [\QQ(\psi)
: \QQ(\chi)] \, \psi(1)$, as desired.

We now turn to another problem concerning the cyclotomic deficiency.
In \cite[Theorem~2.7]{Navarro-Tiep21}, Navarro and Tiep found a
hidden role of the cyclotomic deficiency in the problem of
determining which abelian number fields can arise as fields of
values of characters of odd-order groups. Specifically, if $\FF$ has
odd conductor, then there exist a group $G$ of odd order and
$\chi\in\Irr(G)$ with $\QQ(\chi)=\FF$ if and only if the extension
$\QQ_{c(\FF)}/\FF$ has odd degree. Their proof relies on two key
facts: odd-order groups are solvable (by the Feit--Thompson
theorem~\cite{FT63}), and quasi-primitive irreducible characters of
solvable groups factor completely into products of so-called
$p$-special characters. (See \cite[Theorem~2.17]{I18} for details.
Recall that $\chi$ is \emph{$p$-special} if $\chi(1)$ is a power of
$p$ and the determinantal order of every irreducible constituent of
the restriction of $\chi$ to every subnormal subgroup of $G$ is a
$p$-power.)

The analogous statement for groups whose orders are not divisible by
a fixed prime $p>2$ appears substantially more difficult, as one can
no longer rely on solvability.

\begin{conjecture}[\cite{HNT25}]\label{conj:characters-p'-groups}
Let $p$ be a prime and $\FF/\QQ$ be an abelian field extension. Then
$\FF$ is the field of values of an irreducible character of a
$p'$-order group if and only if both $c(\FF)$ and
$[\QQ_{c(\FF)}:\FF]$ are not divisible by $p$.
\end{conjecture}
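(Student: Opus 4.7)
For the ``if'' direction, an explicit construction suffices. Put $n=c(\FF)$ and let $H\leq(\ZZ/n)^*\cong\Gal(\QQ_n/\QQ)$ be the subgroup whose fixed field is $\FF$, so $|H|=[\QQ_n:\FF]$; by hypothesis both $n$ and $|H|$ are coprime to $p$. Form the semidirect product $G=(\ZZ/n)\rtimes H$, with $H$ acting on $\ZZ/n$ by multiplication; then $|G|=n|H|$ is coprime to $p$. Pick $\lambda\in\Irr(\ZZ/n)$ faithful. Since $H$ embeds in $\Aut(\ZZ/n)$ and thus acts freely on the nontrivial linear characters of $\ZZ/n$, Clifford theory gives $\chi:=\lambda^G\in\Irr(G)$ of degree $|H|$. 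A short computation yields $\chi|_{\ZZ/n}=\sum_{h\in H}\lambda^h$, whence $\sigma_a\in\Gal(\QQ_n/\QQ)$ fixes $\chi$ precisely when $a\in H$, so $\QQ(\chi)=\QQ_n^H=\FF$.

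For the ``only if'' direction, let $|G|$ be coprime to $p$ and $\chi\in\Irr(G)$ satisfy $\QQ(\chi)=\FF$. That $p\nmid c(\FF)$ is immediate, since $c(\chi)\mid\exp(G)\mid|G|$ by Lagrange's theorem. The heart of the matter is showing $p\nmid[\QQ_{c(\FF)}:\FF]$. My plan is to establish a $p'$-refinement of Conjecture~\ref{conj:Hung-Tiep}: if $|G|$ is coprime to $p$ then $[\QQ_{c(\chi)}:\QQ(\chi)]$ is coprime to $p$. One could either aim for the full divisibility form $[\QQ_{c(\chi)}:\QQ(\chi)]\mid\chi(1)$ and use that $\chi(1)\mid|G|$ is coprime to $p$, or argue more directly for the $p$-part. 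For $p=2$, the Feit-Thompson theorem makes $G$ solvable, and Cram's theorem then supplies the divisibility, which recovers the Navarro-Tiep odd-order result.

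For odd $p$, $G$ need not be solvable, so I would proceed by induction on $|G|$. The imprimitive case reduces to a proper subgroup by standard Clifford-theoretic arguments, analogous to the imprimitive discussion following Proposition~\ref{prop:1}, so the problem boils down to the quasi-primitive, and ultimately the almost simple, situation, in the spirit of the reduction of \cite{BKNT24}. The principal obstacle is then verifying the claim for each non-abelian simple group $S$ with $p\nmid|S|$ and its extensions by outer automorphisms. Alternating and sporadic groups contribute only finitely many cases per prime and can be inspected directly. For simple groups of Lie type $\mathcal{G}(q)$ with $p\nmid|\mathcal{G}(q)|$, congruence constraints on $q$ force $p$ to avoid the cyclotomic factors of $|\mathcal{G}(q)|$; one would employ Lusztig's Jordan decomposition together with the known Galois actions on unipotent and semisimple characters (after Srinivasan-Vinroot, Navarro-Tiep, and subsequent authors) to describe $c(\chi)$ and $[\QQ_{c(\chi)}:\QQ(\chi)]$ combinatorially, and to show that every $p$-factor of the deficiency is already absorbed by $\chi(1)$.
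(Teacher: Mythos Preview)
This statement is presented in the paper as an open conjecture; only the ``if'' direction is established unconditionally. Your construction for that direction is precisely the Fein--Gordon argument \cite{FG72} that the paper cites (one small slip: $H$ acts freely on the \emph{faithful} characters of $\ZZ/n$, not on all nontrivial ones, but since your $\lambda$ is faithful and the induced character vanishes off the normal subgroup $\ZZ/n$, the computation of $\QQ(\chi)$ goes through).

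For the ``only if'' direction your plan diverges from the paper's. The paper does not attempt a direct reduction inside the class of $p'$-groups; instead it observes that the desired conclusion would follow from the stronger divisibility $[\QQ_{c(\chi)}:\QQ(\chi)]\mid|G|$ conjectured for \emph{arbitrary} finite $G$, and reports (from \cite{HNT25}) that this divisibility is itself a consequence of the inductive Feit condition of \cite{BKNT24}. That route plugs into a ready-made reduction theorem with a precise inductive condition already being verified family by family. Your route---staying within $p'$-groups throughout---has the appealing feature that only simple groups of $p'$-order would need checking, but the reduction you sketch is not yet secure. Already the imprimitive step is subtler than you indicate: the discussion after Proposition~\ref{prop:1} yields $[\QQ(\psi):\QQ(\chi)]\le[G:H]$ and hence the \emph{inequality} $[\QQ_{c(\chi)}:\QQ(\chi)]\le[\QQ_{c(\psi)}:\QQ(\psi)]\cdot[G:H]$, but for your purpose you need $p\nmid[\QQ(\psi):\QQ(\chi)]$, and being bounded by the $p'$-number $[G:H]$ does not force that. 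The further passage from quasi-primitive to almost simple is likewise only gestured at ``in the spirit of'' \cite{BKNT24}, whereas that paper's reduction is tailored to its specific inductive condition and does not transfer automatically. Your programme may be viable, but as written it is a heuristic outline rather than a reduction comparable to what the paper invokes.
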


Already in 1972, B.~Fein and B.~Gordon proved that for any given
abelian extension $\FF/\QQ$, there exists a (solvable) group $G$ of
order $c(\FF)[\QQ_{c(\FF)}:\FF]$ and $\chi\in\Irr(G)$ such that
$\QQ(\chi)=\FF$ (see \cite[Theorem 8]{FG72}). This handles the
``if'' implication. Therefore,
Conjecture~\ref{conj:characters-p'-groups} follows from the
following, somewhat unexpected, statement.

\begin{conjecture}[\cite{HNT25}]
Let $G$ be a finite group and $\chi\in\Irr(G)$. Then
\[[\QQ_{c(\chi)}:\QQ(\chi)] \text{ divides } |G|.\]
\end{conjecture}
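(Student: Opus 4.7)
The plan is to prove this by induction on $|G|$, reducing through Clifford theory to the quasi-simple case and then verifying it for finite simple groups. A natural base case is that of solvable $G$: Cram's theorem yields $[\QQ_{c(\chi)}:\QQ(\chi)] \mid \chi(1) \mid |G|$, and this extends to any Clifford reduction landing in a solvable inertia group. In particular, groups of odd order are taken care of by Feit-Thompson.

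For the inductive step, I would first handle the imprimitive case $\chi = \psi^G$ with $\psi \in \Irr(H)$, $H < G$. From $\QQ(\chi)\subseteq\QQ(\psi)$ and $c(\chi)\mid c(\psi)$ one obtains
\[
[\QQ_{c(\chi)}:\QQ(\chi)] = \frac{[\QQ_{c(\psi)}:\QQ(\psi)]\cdot[\QQ(\psi):\QQ(\chi)]}{[\QQ_{c(\psi)}:\QQ_{c(\chi)}]}.
\]
Induction controls the first factor by $|H|$, and Frobenius reciprocity combined with the fact that all $\Gal(\QQ(\psi)/\QQ(\chi))$-conjugates of $\psi$ appear as constituents of $\chi_H$ with multiplicity one yields $[\QQ(\psi):\QQ(\chi)]\le[G:H]$. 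Upgrading these two facts to the divisibility $[\QQ_{c(\chi)}:\QQ(\chi)]\mid|G|$ is the first real difficulty: one must verify that the denominator $[\QQ_{c(\psi)}:\QQ_{c(\chi)}]$ absorbs precisely the right portion of the numerator, which calls for a refined Galois-orbit analysis of the constituents of $\chi_H$ beyond the crude inequality above.

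For $\chi$ primitive, Clifford theory with a minimal normal subgroup $N\trianglelefteq G$ takes over. If $N$ is abelian then primitivity forces $N\le\bZ(G)$, and the problem passes to $G/N$ after a linear twist. If $N$ is a direct product of isomorphic non-abelian simple groups, lifting to a central cover and using the standard tensor factorization of $\chi_N$ reduces matters to irreducible characters of quasi-simple groups.

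The main obstacle is thus the quasi-simple case. For alternating and sporadic groups, explicit character-table data suffices. For groups of Lie type, one must control, via Deligne-Lusztig theory, the order of the Galois stabilizer of $\chi$ in $(\ZZ/c(\chi))^*$, essentially strengthening the inequality in Conjecture~\ref{conj:Hung-Tiep} to a divisibility statement compatible with the group order. This would parallel the analysis of Galois action on Lusztig series used in \cite{Navarro-Tiep21} and \cite{Hung-Tiep23}, but would require a genuinely new input: a precise arithmetic match between the cyclotomic deficiency arising in each Lusztig series $\EC(G,s)$ and divisors of $|G|$, with small-rank and bad-characteristic cases (especially spin and exceptional covers) being the most technically demanding.
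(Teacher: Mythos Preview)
The statement you are attacking is recorded in the paper as an open \emph{conjecture}; there is no unconditional proof to compare against. What the paper does say is that, by \cite{HNT25}, the divisibility $[\QQ_{c(\chi)}:\QQ(\chi)]\mid|G|$ \emph{follows from the inductive Feit condition} of \cite{BKNT24}. So the only argument on offer is conditional, and its outstanding input is exactly the simple-group verification already needed for Feit's conjecture.

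Your direct Clifford-theoretic reduction is a genuinely different route, and the gap you flag in the imprimitive step is real and, as far as is known, not closable by the mechanism you sketch. From $[\QQ_{c(\psi)}:\QQ(\psi)]\mid|H|$ and $[\QQ(\psi):\QQ(\chi)]\le[G:H]$ one cannot deduce $[\QQ_{c(\chi)}:\QQ(\chi)]\mid|G|$: the second relation is only an inequality, and nothing forces the denominator $[\QQ_{c(\psi)}:\QQ_{c(\chi)}]$ to cancel precisely the right piece of the numerator. This is exactly where the route through \cite{BKNT24} gains traction. Conjecture~\ref{conj:BKNT24} (in its inductive form for simple groups) produces, for every nonlinear primitive $\chi$, a proper subgroup $H<G$ and $\psi\in\Irr(H)$ with $\QQ(\psi)=\QQ(\chi)$ \emph{exactly}. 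That equality gives $c(\psi)=c(\chi)$ and hence $[\QQ_{c(\chi)}:\QQ(\chi)]=[\QQ_{c(\psi)}:\QQ(\psi)]$, so induction on $|G|$ runs cleanly with no arithmetic residue. Your reduction $\chi=\psi^G$ yields only the containment $\QQ(\chi)\subseteq\QQ(\psi)$, and that is why the divisibility leaks.

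Your quasi-simple endpoint is essentially the same unknown in different clothing: showing that $[\QQ_{c(\chi)}:\QQ(\chi)]$ divides $|X|$ for every $\chi\in\Irr(X)$ with $X$ quasi-simple is not known to be any easier than, or logically independent of, verifying the inductive Feit condition for the underlying simple group. So the proposal does not circumvent the obstacle the paper identifies; it arrives at the same wall from another direction, carrying an additional unresolved reduction (the imprimitive step) along the way.
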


\noindent Perhaps surprisingly, our work in \cite{HNT25} shows that
this divisibility follows from the inductive Feit condition
discussed in Section~\ref{sec:conductor}.

It is remarkable that we have not been able to find a counterexample
for the inequality
$\nu_p(c(\chi))+\nu_p([\QQ_{c(\chi)}:\QQ(\chi)])\le \nu_p(|G|)$. If
this turns out to be true in general, then
Conjecture~\ref{conj:characters-p'-groups} could be extended to all
finite groups.


\section{The $p$-rationality}\label{sec:p-rationality}

We again fix a prime $p$ but now consider a $p$-local version of the
conductor which measures how rational at $p$ the values of a
character $\chi$ are. More specifically, for $\chi\in\Char(G)$, let
\[
\lev(\chi):=\nu_p(c(\chi)),
\]
where $\nu_p$ is the usual $p$-adic valuation as defined before. We
call it the \emph{$p$-rationality level} of $\chi$. Note that $\chi$
is more irrational at $p$ if its level is higher. In particular,
$\chi$ is \emph{$p$-rational} if $\lev(\chi)=0$ and $\chi$ is
\emph{almost $p$-rational} if $\lev(\chi)\leq 1$. As we will see,
this notion aries naturally in some global-local conjectures in the
field that have been studied extensively in the last two decades.

One of the first observations highlighting the importance of
$p$-rationality is due to Isaacs and Navarro
\cite{Isaacs-Navarro01}. Motivated by Brauer's Problem~12
\cite{Brauer63}, which asks how much information about the structure
of Sylow subgroups of a group $G$ can be extracted from the
character table of $G$, they formulated the following statement --
originally a conjecture, but now a theorem.

\begin{theorem}\label{IN01}
Let $p$ be a prime, $G$ a finite group and $P\in\Syl_p(G)$. Let
$\alpha\in\ZZ^{+}$. Then $\exp(P/P')\leq p^\alpha$ if and only if
the $p$-rationality level of every $p'$-degree irreducible character
of $G$ is at most $\alpha$.
\end{theorem}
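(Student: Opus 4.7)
The plan is to reduce to the local subgroup $N := \bN_G(P)$ via a Galois-equivariant McKay-type correspondence, and then analyze $\Irr_{p'}(N)$ directly with Clifford theory. First, I would recast the $p$-rationality condition as a Galois invariance: let $\mathcal{H}_\alpha \le \Gal(\QQ^{ab}/\QQ)$ be the subgroup of automorphisms that fix all $p'$-roots of unity and act on $p$-power roots of unity by $\zeta \mapsto \zeta^k$ with $k \equiv 1 \pmod{p^\alpha}$. Then $\lev(\chi) \le \alpha$ if and only if $\chi$ is $\mathcal{H}_\alpha$-fixed, so the theorem becomes a comparison between $\mathcal{H}_\alpha$-invariance of $\Irr_{p'}(G)$ and the structure of $P/P'$.

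The main input is a Galois refinement of the McKay conjecture, furnishing a bijection $\Omega \colon \Irr_{p'}(G) \to \Irr_{p'}(N)$ that commutes with $\mathcal{H}_\alpha$ (in fact with a larger subgroup of $\Gal(\QQ^{ab}/\QQ)$). Such a bijection gives $\lev(\chi) = \lev(\Omega(\chi))$, reducing the theorem to the corresponding statement for $N$. There, because $P \nor N$, every irreducible constituent $\mu$ of $\psi_P$ for $\psi \in \Irr_{p'}(N)$ has degree dividing $\psi(1)$ while being a character of the $p$-group $P$; hence $\mu$ is forced to be linear and factors through $P/P'$. By Clifford theory together with Isaacs's canonical extension for coprime actions (using that $N/P$ is a $p'$-group), $\psi = (\hat\mu \cdot \beta)^N$, with $\hat\mu$ the canonical extension of $\mu$ to the inertia subgroup $I_N(\mu)$ and $\beta \in \Irr(I_N(\mu)/P)$. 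The field of values of $\psi$ then lies in $\QQ(\mu) \cdot \QQ_m$ for $m = |N/P|$ coprime to $p$, so $\lev(\psi) = \nu_p(\ord(\mu))$, and this is bounded by $\alpha$ uniformly in $\psi$ if and only if $\exp(P/P') \le p^\alpha$. For the $\Leftarrow$ direction one runs this in reverse: starting from a linear character $\lambda$ of $P$ of order $p^{\alpha+1}$, canonically extend and induce to obtain $\psi \in \Irr_{p'}(N)$ with $\lev(\psi) = \alpha+1$, and transfer back via $\Omega^{-1}$.

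The principal obstacle is establishing the required Galois-equivariant McKay bijection, which is a deep, classification-dependent statement (the McKay--Navarro conjecture, now settled in many cases although still open in general); it is precisely this machinery that elevated the Isaacs--Navarro statement from conjecture to theorem. By comparison, the Clifford-theoretic computation inside $N$ is comparatively routine once the canonical extension is in place, and the extension/cohomology bookkeeping can be carried out cleanly because $N/P$ is a $p'$-group acting on the $p$-group $P$. A minor additional technicality is to verify that the compositum $\QQ(\mu)\cdot \QQ_m$ does not introduce extra $p$-power roots of unity beyond $\QQ(\mu)$, which follows since $m$ is a $p'$-integer.
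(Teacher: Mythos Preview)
Your plan is sound and does lead to a proof, but it differs from the route the paper describes, and one claim needs a small patch.

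The paper does not prove Theorem~\ref{IN01} itself; it attributes the two directions to separate sources. The ``if'' direction (if every $\chi\in\Irr_{p'}(G)$ has $\lev(\chi)\le\alpha$, then $\exp(P/P')\le p^\alpha$) is credited to \cite[Theorem~B]{Navarro-Tiep19}, where a $p'$-degree irreducible character of $G$ of level exceeding $\alpha$ is produced directly from a linear character of $P$ of order exceeding $p^\alpha$, without passing through any McKay-type bijection. Your use of $\Omega^{-1}$ for this direction therefore works but is heavier than what was actually needed. For the ``only if'' direction, the paper records the approach of \cite{Navarro-Tiep19}: a reduction to almost quasisimple groups, subsequently verified for $p=2$ in \cite{Malle19} and in general in \cite{RS25}. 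Your alternative---transferring the problem to $N=\bN_G(P)$ via an equivariant bijection and then doing Clifford theory there---is valid and arguably more transparent. Note, however, that the bijection you require is not the full McKay--Navarro conjecture (still open for odd $p$) but only its $\mathcal{H}_0$-equivariant consequence, i.e., the Isaacs--Navarro Galois conjecture, which is precisely what \cite{RS25} establishes (Theorem~\ref{thm:RSF} in the paper). So the ``principal obstacle'' you name is in fact settled.

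One technical gap: the asserted equality $\lev(\psi)=\nu_p(\ord(\mu))$ is not immediate. The inequality $\le$ follows from your field-of-values estimate, but for $\ge$ you must rule out that $\sigma_\alpha$ fixes $\psi$ when $\ord(\mu)>p^\alpha$. The clean fix is to note that $N/P$ is a $p'$-group, so every $N$-orbit on $\Irr(P/P')$ has size coprime to $p$; since $\sigma_\alpha$ has $p$-power order and fixes no linear character of $P$ of order exceeding $p^\alpha$, it cannot stabilize any such $N$-orbit, and hence cannot fix any $\psi\in\Irr_{p'}(N)$ lying above it.
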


The ``if'' direction was proved by Navarro and Tiep in
\cite[Theorem~B]{Navarro-Tiep19}. For the opposite direction, they
obtained a clean reduction to almost quasisimple groups: it is true
if it holds for these groups. Using this reduction, G.~Malle
\cite{Malle19} completed the proof for $p=2$. Theorem~\ref{IN01} is
now fully resolved by recent work of L.~Ruhstorfer and
A.\,A.~Schaeffer Fry \cite{RS25}. We will come back to this
impressive work later.

Theorem \ref{IN01} is only a small piece of the celebrated
McKay-Navarro conjecture we are about to describe. The well-known
McKay conjecture \cite{McKay}, which is now a theorem thanks to the
work of M.~Cabanes and B.~Sp\"{a}th \cite{CS24}, asserts that the
number of irreducible $p'$-degree characters of $G$ is equal to that
of the normalizer $\bN_G(P)$ of some $P\in\Syl_p(G)$; that is,
\[
|\Irr_{p'}(G)|=|\Irr_{p'}(\bN_G(P))|.
\]
Noticing the similarity between the values of $p'$-degree characters
in $G$ and $\bN_G(P)$, Navarro \cite{Navarro04} proposed that there
should exist a bijection between the two sets that commutes with the
action of certain Galois automorphisms. More precisely, let
$\mathcal{H}_p$ be the subgroup of the Galois group
$\mathcal{G}:=\Gal(\QQ_{|G|}/\QQ)$ consisting of automorphisms that
send every root of unity $\zeta\in\QQ_{|G|}$ of order not divisible
by $p$ to $\zeta^{q}$, where $q$ is a certain fixed power of $p$.
The \emph{McKay-Navarro conjecture} asserts that
\begin{quote}
\emph{there exists an $\mathcal{H}_p$-equivariant bijection from
$\Irr_{p'}(G)$ to $\Irr_{p'}(\bN_G(P))$.}
\end{quote}
The conjecture has been resolved for $p=2$ by Ruhstorfer and
Schaeffer Fry \cite{RS25a}, but it is still open for odd primes.

Since $\mathcal{H}_p$ contains
$\Gal(\QQ_{|G|}/\QQ_{p^\alpha|G|_{p'}})$ for every nonnegative
integer $\alpha\le \nu_p(|G|)$, the McKay-Navarro conjecture implies
that the numbers of characters in $\Irr_{p'}(G)$ and
$\Irr_{p'}(\bN_G(P))$ at every $p$-rationality level $\alpha$ are
the same:
\[
|\{\chi\in\Irr_{p'}(G): \lev(\chi)=
\alpha\}|=|\{\psi\in\Irr_{p'}(\bN_G(P)): \lev(\psi)= \alpha\}|.
\]

There are particular elements of the Galois group that conveniently
determine the $p$-rationality of characters. For each
$\alpha\in\ZZ^{+}$, we denote by $\sigma_{\alpha}$ the automorphism
in $\Gal(\QQ^{ab}/\QQ)$ that fixes roots of unity of order not
divisible by $p$ and maps every $p$-power root of unity $\xi$ to
$\xi^{1+p^\alpha}$. Abusing notation, for a specific $G$, we also
use $\sigma_{\alpha}$ for the restriction of this automorphism to
$\QQ_{|G|}$. It is not difficult to see that, if
$\chi\in\Irr_{p'}(G)$ is not $p$-rational, then $\lev(\chi)$ is
precisely the smallest positive integer $\alpha$ such that $\chi$ is
$\sigma_{\alpha}$-invariant.

Ruhstorfer and Schaeffer Fry \cite{RS25} recently proved a weaker
version of the McKay-Navarro conjecture, known as the
\emph{Isaacs-Navarro Galois conjecture}. This was first proposed in
\cite{Isaacs-Navarro02}. Let $\mathcal{H}_0$ be the subgroup of
$\mathcal{H}_p$ consisting of all Galois automorphisms that act
trivially on $p'$-roots of unity and have $p$-power order.

\begin{theorem}[Ruhstorfer-Schaeffer Fry]\label{thm:RSF}
Let $G$ be a finite group, $p$ be a prime dividing $|G|$, and
$P\in\Syl_p(G)$. Then there exists an $\mathcal{H}_0$-equivariant
bijection from $\Irr_{p'}(G)$ to $\Irr_{p'}(\bN_G(P))$.
\end{theorem}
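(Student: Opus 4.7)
My plan is to follow the by-now-standard paradigm for McKay-type theorems: formulate an inductive Isaacs-Navarro Galois condition on universal covering groups of finite simple groups, prove a reduction theorem saying that if every simple group involved in $G$ satisfies this condition then the conclusion holds for $G$, and then verify the condition for every family of finite simple groups. Since the McKay conjecture is now a theorem, bijections $\Irr_{p'}(G)\to \Irr_{p'}(\bN_G(P))$ are already available; the task is to produce one that commutes with $\mathcal{H}_0$. The key structural feature to exploit throughout is that $\mathcal{H}_0$ consists only of $p$-power order automorphisms fixing all $p'$-roots of unity, so the part of the Galois action one must track is concentrated at the prime $p$.

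For the reduction I would work within Späth's character-triple framework. Given $G$ with a minimal normal subgroup $N$ that is a direct product of quasi-simple groups, one combines a McKay-type bijection for $N$ with Clifford theory to transport characters up to $G$; the data needed at the quasi-simple level is a bijection $\Omega_X:\Irr_{p'}(X)\to\Irr_{p'}(\bN_X(Q))$ for the universal cover $X$ of a simple group, together with a central isomorphism of character triples over $X\rtimes (\Aut(X)_Q\times \mathcal{H}_0)$. This is exactly Späth's inductive McKay condition with the full Galois group replaced by $\mathcal{H}_0$. The reduction proof is then a careful rewriting of the Cabanes-Späth arguments, tracking the $\mathcal{H}_0$-action alongside the automorphism action; a useful simplification is that whenever $X$ is $p$-rational, $\mathcal{H}_0$ acts trivially and the extra equivariance is automatic.

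The verification for the universal covers $X$ breaks into the usual families. Cyclic, alternating, and sporadic groups are handled by direct inspection, using explicit descriptions of $p'$-characters and Sylow normalizers. Groups of Lie type in defining characteristic are essentially immediate since their $p'$-degree irreducibles lie in the principal series and are $p$-rational. The substantial work is for groups of Lie type $G(q)$ in non-defining characteristic. Here I would parametrize $\Irr_{p'}(G)$ by Lusztig's Jordan decomposition via pairs $(s,\psi)$ with $s$ a semisimple $p'$-element of the dual group $G^*$ and $\psi$ a $p'$-degree unipotent character of $\bC_{G^*}(s)^F$, and parametrize $\Irr_{p'}(\bN_G(P))$ through Malle-Späth's $e$-Harish-Chandra theory. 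One then computes how the generators $\sigma_\alpha$ of $\mathcal{H}_0$ permute both parametrizations\,---\,essentially through $s\mapsto s^{1+p^\alpha}$ on the semisimple side and trivially on the unipotent side\,---\,and matches the resulting orbits.

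The main obstacle will be establishing Galois-equivariant character-triple isomorphisms for classical and exceptional groups of Lie type in non-defining characteristic, particularly the delicate extendibility analysis for types $\mathsf{D}$ and $\mathsf{E}_6$, where graph, diagonal and field automorphisms interact non-trivially with the Galois action. The decisive advantage of the Isaacs-Navarro Galois conjecture over the full McKay-Navarro conjecture is the smallness of $\mathcal{H}_0$: it neutralizes much of the Galois twist governing the field of values of $p'$-characters in Navarro-Tiep's work and reduces the problem to a coarser orbit-matching. Consequently I would expect the proof to take the Cabanes-Späth McKay bijection and the Ruhstorfer-Schaeffer Fry $p=2$ McKay-Navarro bijection as input, and to adjust them by character-triple isomorphisms so as to achieve $\mathcal{H}_0$-equivariance without needing the full Galois equivariance that remains open at odd primes.
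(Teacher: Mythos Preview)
The paper does not contain a proof of this theorem. It is a survey article, and Theorem~\ref{thm:RSF} is stated as a result of Ruhstorfer and Schaeffer~Fry with a citation to their preprint \cite{RS25}; no argument is given in the paper itself. There is therefore no ``paper's own proof'' against which to compare your proposal.

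That said, your outline is a reasonable high-level description of the paradigm one expects such a proof to follow: a reduction to an inductive condition on quasisimple groups in the Sp\"ath character-triple framework, followed by case-by-case verification across the families of simple groups, with the cross-characteristic Lie type case being the technical heart. Your remark that the smallness of $\mathcal{H}_0$ (only $p$-power order automorphisms fixing $p'$-roots of unity) is what makes this tractable where the full McKay--Navarro conjecture is not, is exactly the point the paper emphasizes in the surrounding discussion. But as a ``proof'' for \emph{this} paper your proposal is misplaced: the appropriate content here is simply a citation, which is what the paper provides.
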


Theorem~\ref{thm:RSF} yields several consequences that had
previously been conjectured as part of the McKay-Navarro conjecture.
For example, since $\sigma_{\alpha} \in \mathcal{H}_0$ for every
$\alpha \in \mathbb{Z}^{+}$, one obtains the following result.
(Recall that a character $\chi$ is almost $p$-rational if
$\lev(\chi) \le 1$. Some initial work on almost $p$-rational
characters, including bounds on their number, appears in
\cite{Hung-Malle-Maroti21}.)

\begin{theorem}[Ruhstorfer-Schaeffer Fry]\label{thm:RSF2}
Let $G$ be a finite group, let $p$ be a prime dividing $|G|$, and
let $P\in\Syl_p(G)$. Then for each $\alpha\geq 2$, the numbers of
characters of $p$-rationality level $\alpha$ in $\Irr_{p'}(G)$ and
$\Irr_{p'}(\bN_G(P))$ coincide. Moreover, the numbers of almost
$p$-rational characters in $\Irr_{p'}(G)$ and $\Irr_{p'}(\bN_G(P))$
are also equal.
\end{theorem}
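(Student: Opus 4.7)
The plan is to deduce both claims of Theorem~\ref{thm:RSF2} directly from the $\mathcal{H}_0$-equivariant bijection $\Omega\colon\Irr_{p'}(G)\to\Irr_{p'}(\bN_G(P))$ furnished by Theorem~\ref{thm:RSF}. Because $\Omega$ commutes with every $\sigma\in\mathcal{H}_0$, it restricts to a bijection between the $\sigma$-fixed characters on the two sides, and more generally between the characters fixed by any subgroup of $\mathcal{H}_0$. The strategy is therefore to realize the sets ``$\lev(\chi)=\alpha$'' for $\alpha\ge 2$ and ``$\chi$ almost $p$-rational'' as signed combinations of fixed-point sets of this form.

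Set $n=\nu_p(|G|)$. First I would check that $\sigma_\alpha\in\mathcal{H}_0$ for every $\alpha\ge 1$: by construction $\sigma_\alpha$ acts trivially on the $p'$-roots of unity, and its restriction to $\QQ_{p^n}$ has $p$-power order, since $1+p^\alpha$ has order $p^{n-\alpha}$ in $(\ZZ/p^n\ZZ)^*$ when $p$ is odd, and $1+2^\alpha$ has $2$-power order in $(\ZZ/2^n\ZZ)^*$. Next I would identify the fixed field of $\sigma_\alpha$ inside $\QQ_{|G|}$. For odd $p$ and $\alpha\ge 1$, the subgroup $\langle 1+p^\alpha\rangle\le (\ZZ/p^n\ZZ)^*$ has order $p^{n-\alpha}=[\QQ_{p^n}:\QQ_{p^\alpha}]$ and pointwise fixes $\QQ_{p^\alpha}$, and so cuts out exactly $\QQ_{p^\alpha}$. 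For $p=2$ and $\alpha\ge 2$, the element $1+2^\alpha$ lies in the cyclic factor $\langle 5\rangle$ of $(\ZZ/2^n\ZZ)^*\cong\langle -1\rangle\times\langle 5\rangle$, and an analogous count shows that it again cuts out $\QQ_{2^\alpha}$. Consequently, for every $\alpha\ge 2$,
\[
\{\chi\in\Irr_{p'}(G):\lev(\chi)\le\alpha\}=\{\chi\in\Irr_{p'}(G):\chi^{\sigma_\alpha}=\chi\}.
\]

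I would then identify the fixed field of $\mathcal{H}_0$ itself. For odd $p$, $\mathcal{H}_0=\langle\sigma_1\rangle$ has fixed field $\QQ_{p|G|_{p'}}$, so $\mathcal{H}_0$-invariance is equivalent to $\lev(\chi)\le 1$, i.e., to $\chi$ being almost $p$-rational. For $p=2$, $\mathcal{H}_0$ is the whole group $\Gal(\QQ_{2^n}/\QQ)$ (its order $2^{n-1}$ being already a $2$-power), with fixed field $\QQ_{|G|_{p'}}$; thus $\mathcal{H}_0$-invariance means that $\chi$ is $2$-rational, and because the conductor of any abelian number field is never $\equiv 2\pmod 4$, $\lev(\chi)=1$ cannot occur when $p=2$, so this again coincides with $\chi$ being almost $2$-rational.

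Everything now drops out of $\mathcal{H}_0$-equivariance of $\Omega$. The almost $p$-rational claim is immediate. For the level-$\alpha$ claim with $\alpha\ge 2$, the count of characters of level exactly $\alpha$ equals $|\mathrm{Fix}(\sigma_\alpha)|-|\mathrm{Fix}(\sigma_{\alpha-1})|$ when $p$ is odd or $\alpha\ge 3$, and equals $|\mathrm{Fix}(\sigma_2)|-|\mathrm{Fix}(\mathcal{H}_0)|$ in the remaining case $p=2$, $\alpha=2$; in every instance both terms are preserved by $\Omega$, and so is their difference. I expect the main obstacle to be the careful bookkeeping at $p=2$, $\alpha=1$, where $\sigma_1$-invariance is strictly stronger than $\lev(\chi)\le 1$; but this potential anomaly is neutralized by the observation that level $1$ never occurs when $p=2$.
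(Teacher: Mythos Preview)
Your proposal is correct and follows essentially the same approach as the paper: the paper derives Theorem~\ref{thm:RSF2} from Theorem~\ref{thm:RSF} via the single observation that $\sigma_\alpha\in\mathcal{H}_0$ for every $\alpha\in\ZZ^+$, together with the characterization (stated just before) that for non-$p$-rational $\chi\in\Irr_{p'}(G)$ the level $\lev(\chi)$ is the least $\alpha$ with $\chi^{\sigma_\alpha}=\chi$. Your write-up simply makes this explicit, and your separate handling of the case $p=2$, $\alpha=2$ via $\mathrm{Fix}(\mathcal{H}_0)$ (together with the observation that level $1$ never occurs at $p=2$) is a clean way to avoid invoking the deeper fact $\QQ_{c(\chi)_2}\subseteq\QQ(\chi)$ for odd-degree $\chi$, which underlies the paper's characterization at $p=2$.
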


Unfortunately, Theorem \ref{thm:RSF} is not enough to distinguish
the characters of levels $0$ and $1$. As far as we know, the
following consequence of the McKay-Navarro conjecture (as explained
above), remains open for odd primes. The case $p=2$ is already
known, thanks to \cite{RS25a}.

\begin{conjecture}
Let $G$ be a finite group, let $p$ be a prime dividing $|G|$, and
let $P\in\Syl_p(G)$. Then the numbers of $p$-rational characters in
$\Irr_{p'}(G)$ and $\Irr_{p'}(\bN_G(P))$ are equal.
\end{conjecture}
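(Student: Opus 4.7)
The plan is to upgrade Theorem~\ref{thm:RSF} from $\mathcal{H}_0$-equivariance to equivariance under a larger subgroup of $\Gal(\QQ_{|G|}/\QQ)$ that detects $p$-rationality. Let $\mathcal{K}:=\Gal(\QQ_{|G|}/\QQ_{|G|_{p'}})$. A character $\chi\in\Irr_{p'}(G)$ is $p$-rational if and only if it is fixed by $\mathcal{K}$, so the conjecture follows once we produce a $\mathcal{K}$-equivariant bijection $\Irr_{p'}(G)\to\Irr_{p'}(\bN_G(P))$. Since the full McKay--Navarro theorem for $p=2$ established by Ruhstorfer--Schaeffer Fry \cite{RS25a} already delivers such a bijection in that case, we focus on odd $p$.

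For odd $p$, $\mathcal{K}\cong(\ZZ/|G|_p)^\times$ is cyclic and $\mathcal{H}_0$ is its unique $p$-Sylow; the quotient $\mathcal{K}/\mathcal{H}_0$ has order $p-1$ and is generated by an automorphism $\sigma$ whose image in $(\ZZ/p)^\times$ is a primitive root. Theorem~\ref{thm:RSF} provides an $\mathcal{H}_0$-equivariant bijection $f$, and by Theorem~\ref{thm:RSF2} this $f$ already restricts to a bijection between the almost $p$-rational characters of $\Irr_{p'}(G)$ and $\Irr_{p'}(\bN_G(P))$. On this restricted set the $p$-rational characters are exactly the $\sigma$-fixed points, so the conjecture reduces to producing a bijection $f$ whose restriction to almost $p$-rational characters also commutes with $\sigma$.

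The natural route is to adapt the reduction of the Galois--McKay conjecture in \cite{RS25} by replacing $\mathcal{H}_0$ throughout with the cyclic extension $\mathcal{H}_0\langle\sigma\rangle$, thereby formulating an enhanced inductive condition on finite simple groups. Standard Clifford theory, central extensions, and character-triple machinery as employed in \cite{Navarro04, CS24, RS25} should then reduce the conjecture to the verification of this enhanced inductive condition for every finite simple group. An attractive feature of this program is that it is strictly weaker than the full Galois--McKay conjecture, since one only asks equivariance for a single cyclic extension of $\mathcal{H}_0$ rather than for all of $\mathcal{H}_p$.

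The principal obstacle will be the verification of this enhanced inductive condition for simple groups of Lie type at odd primes, particularly in non-defining characteristic. One has to follow the action of $\sigma$ on $\Irr_{p'}$ through the Jordan decomposition of characters and compare it with the corresponding local Galois action inside $\bN_X(Q)$ for the universal covering $X$ and $Q\in\Syl_p(X)$. The alternating and sporadic cases should be accessible by direct computation, and the defining-characteristic case treated through Steinberg-type parameterizations. An arguably cleaner, though substantially more ambitious, approach is to establish the full Galois--McKay conjecture for odd primes, from which the conjecture of this section would follow immediately.
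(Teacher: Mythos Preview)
The paper offers no proof of this statement: it is explicitly presented as an open conjecture, with the remark that Theorem~\ref{thm:RSF} cannot distinguish levels $0$ and $1$ and that only the case $p=2$ is settled (via the full McKay--Navarro theorem for $p=2$ in \cite{RS25a}). There is therefore nothing in the paper to compare your argument against.

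Your proposal is not a proof either; it is a research outline. The strategy you describe---upgrading the $\mathcal{H}_0$-equivariance of Theorem~\ref{thm:RSF} to equivariance under $\mathcal{K}=\Gal(\QQ_{|G|}/\QQ_{|G|_{p'}})$---is natural, and your observations are correct: $\mathcal{K}$-equivariance is exactly what detects $p$-rationality, any $\mathcal{H}_0$-equivariant bijection already preserves the almost $p$-rational locus (since $\sigma_1\in\mathcal{H}_0$), and $\mathcal{K}$-equivariance is strictly weaker than full $\mathcal{H}_p$-equivariance. But the content of the problem lies entirely in the phrases ``should then reduce'', ``should be accessible'', and ``one has to follow the action of $\sigma$ through the Jordan decomposition''. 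None of these steps is carried out. The last of them---controlling the action of a generator of $(\ZZ/p\ZZ)^\times$ on $\Irr_{p'}$ for groups of Lie type in non-defining characteristic at odd $p$ and matching it with the local side---is precisely one of the obstructions that keeps the odd-prime McKay--Navarro conjecture open, and there is at present no indication that restricting attention to the single coset $\mathcal{H}_0\langle\sigma\rangle$ makes the simple-group verification materially easier. In short, you have correctly identified what would suffice, but not supplied it; the statement remains open for odd $p$, exactly as the paper says.
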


It would be ideal to describe, even conjecturally, the
$p$-rationality levels of the irreducible characters of a finite
group in terms of the structure of the group, but this appears to be
a highly nontrivial problem. For irreducible characters of degree
prime to $p$, the answer is hidden in the McKay-Navarro conjecture.
Theorem~\ref{IN01} shows that the \emph{maximal} $p$-rationality
level among the $p'$-degree irreducible characters of $G$ is
$\nu_p(\exp(P/P'))$, with the exceptional case in which the maximal
level is $0$ although $\nu_p(\exp(P/P')) = 1$.

What about the intermediate levels? In \cite{Hung22}, we observed,
again as a consequence of the McKay-Navarro conjecture, that the
$p$-rationality levels of $p'$-degree characters satisfy a
\emph{continuity property}, and proved it for $p=2$. This property
is now fully confirmed for all $p$, thanks to Ruhstorfer-Schaeffer
Fry's Theorem~\ref{thm:RSF2}.

\begin{theorem}\label{thm:continuity}
Let $p$ be a prime, $G$ a finite group and $\alpha\in \ZZ_{\geq 2}$.
If $G$ has an irreducible $p'$-degree character of $p$-rationality
level $\alpha$, then $G$ has irreducible $p'$-degree characters of
every level from $2$ to $\alpha$.
\end{theorem}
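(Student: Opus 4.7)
The plan is to use Theorem~\ref{thm:RSF2} to reduce to the normalizer $N := \bN_G(P)$ and then construct characters of each intermediate level using the normal Sylow $p$-subgroup structure of $N$. Since Theorem~\ref{thm:RSF2} provides an $\mathcal{H}_0$-equivariant bijection between $\Irr_{p'}(G)$ and $\Irr_{p'}(N)$, the number of level-$\alpha$ characters in these two sets agree for every $\alpha\ge 2$. Thus both the hypothesis and the conclusion of Theorem~\ref{thm:continuity} are equivalent for $G$ and for $N$, and one may assume $G = N$, with $P \nor G$.

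By Schur--Zassenhaus, write $N = P \rtimes H$ with $H$ a $p$-complement. Any $\chi \in \Irr_{p'}(N)$ lies over a necessarily linear character $\lambda \in \Irr(P/P')$ and has the form $\chi = (\tilde\lambda \cdot \tau)^N$, where $\tilde\lambda \in \Irr(I_N(\lambda))$ is the unique canonical extension of $\lambda$ of $p$-power order and $\tau \in \Irr(I_H(\lambda))$ has $p'$-degree (inflated to $I_N(\lambda)$). Since $\tau$ takes values in $p'$-th roots of unity and is hence fixed by each $\sigma_\beta$, a direct computation yields $\chi^{\sigma_\beta} = \chi$ if and only if $\lambda^{1+p^\beta}$ is $H$-conjugate to $\lambda$. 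Writing $A := \Irr(P/P')$ additively as a $\ZZ[H]$-module, this gives
\[
\lev(\chi) \;=\; \min\bigl\{\beta \ge 0 : p^\beta \lambda \in (H-1)\lambda\bigr\},
\]
where $(H-1)\lambda := \{h\lambda - \lambda : h \in H\}$.

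Now suppose $\chi_0 \in \Irr_{p'}(N)$ has level $\alpha \geq 2$, arising from some $\lambda$. After a preliminary reduction to the case $\ord(\lambda) = p^\alpha$, set $\lambda_j := p^j \lambda$ for $j \in [0, \alpha - 2]$; each $\lambda_j$ has order $p^{\alpha - j}$ and yields, via its canonical extension and induction, a character $\chi_j \in \Irr_{p'}(N)$. The upper bound $\lev(\chi_j) \leq \alpha - j$ is immediate from $p^\alpha \lambda = 0$. For the lower bound, suppose $p^{\alpha-1}\lambda \in p^j (H-1)\lambda$, say $p^{\alpha-1}\lambda = p^j(h\lambda - \lambda)$ for some $h \in H$. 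Reducing modulo $A[p^j]$ forces $h\bar\lambda = (1 + p^{\alpha - 1 - j})\bar\lambda$ in $A/A[p^j]$, where $\bar\lambda$ has order $p^{\alpha - j}$. For $p$ odd and $j \leq \alpha - 2$, the scalar $1 + p^{\alpha - 1 - j}$ has order exactly $p$ in $(\ZZ/p^{\alpha - j})^\times$, while $h \in H$ has $p'$-order---a contradiction. Thus $\lev(\chi_j) = \alpha - j$, and as $j$ runs through $[0, \alpha - 2]$ we realize every level in $[2, \alpha]$.

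The main obstacle is the case $p = 2$, where $(\ZZ/2^k)^\times \cong \ZZ/2 \times \ZZ/2^{k-2}$ is non-cyclic and the order of $1 + 2^{\alpha - 1 - j}$ requires separate analysis; the preliminary reduction to $\ord(\lambda) = p^\alpha$ likewise needs careful justification, using a variant of the same module-theoretic contradiction. The hypothesis $\alpha \geq 2$ in the conclusion is essential: at $j = \alpha - 1$, the scalar $1 + p^0 = 2$ can have $p'$-order, so the contradiction argument genuinely breaks down, and explicit examples with $A = \ZZ/p^\alpha$ and $H = \langle -1 \rangle$ show that the descent from level $\alpha$ can skip level $1$ and drop directly to level $0$.
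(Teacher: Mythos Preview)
Your approach---reduce to $N=\bN_G(P)$ via Theorem~\ref{thm:RSF2} and then analyze the normal-Sylow case through Clifford theory and the canonical extension---is precisely the route the paper indicates; the paper itself does not spell out a proof but simply records that the statement follows from \cite{Hung22} together with Theorem~\ref{thm:RSF2}.

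Two minor points on the write-up. Your biconditional ``$\chi^{\sigma_\beta}=\chi$ if and only if $\lambda^{1+p^\beta}$ is $H$-conjugate to $\lambda$'' is correct, but not merely because $\sigma_\beta$ fixes $\tau$: one must also check that the conjugating element $h_0\in H$ satisfies $\tau^{h_0}=\tau$. This is automatic, because any such $h_0$ acts on $\langle\lambda\rangle$ as multiplication by $1+p^\beta$, an element of $p$-power order in $(\ZZ/p^k)^\times$, while $|h_0|$ is prime to $p$; hence $\lambda^{1+p^\beta}=\lambda$ and $h_0\in I_H(\lambda)$. This is the same coprimeness argument you invoke later, and once observed it yields $\lev(\chi)=\nu_p(\ord\lambda)$ outright for \emph{every} $\chi\in\Irr_{p'}(N)$---so the ``preliminary reduction'' to $\ord(\lambda)=p^\alpha$ is unnecessary, and the descent $\lambda_j=p^j\lambda$ gives $\lev(\chi_j)=\alpha-j$ immediately. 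Second, your caution about $p=2$ is unwarranted: since $|(\ZZ/2^k)^\times|=2^{k-1}$ is itself a $2$-power, every unit (in particular $1+2^{\alpha-1-j}$ for $j\le\alpha-2$) has $2$-power order, and the contradiction with $h$ of odd order goes through verbatim.
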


The following simply-stated result follows from Theorems~\ref{IN01}
and \ref{thm:continuity}, as shown in \cite[Theorem~2.5]{Hung22}.

\begin{theorem}
Let $p$ be a prime, $G$ a finite group and $\alpha\in \ZZ_{\geq 2}$.
Let $M$ be a subgroup of $G$ of $p'$-index. Then $G$ has an
irreducible $p'$-degree character of $p$-rationality level $\alpha$
if and only if $M$ does.
\end{theorem}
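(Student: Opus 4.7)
The plan is to reduce both existence statements to a single numerical condition, namely $\nu_p(\exp(P/P')) \geq \alpha$, where $P$ is a common Sylow $p$-subgroup of $G$ and $M$. Once the reduction is in place, the biconditional is trivially symmetric.

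The first step is to fix a Sylow $p$-subgroup $P$ of $M$ and observe that, since $[G:M]$ is coprime to $p$, the group $P$ is also a Sylow $p$-subgroup of $G$. Consequently $P/P'$, and in particular the integer $\nu_p(\exp(P/P'))$, depends only on $P$ and not on the ambient group; this is the structural input that allows $G$ and $M$ to be compared on equal footing.

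The heart of the argument is the following equivalence, which I would establish for any finite group $H$ with Sylow $p$-subgroup $P$ and any $\alpha \in \ZZ_{\geq 2}$: \emph{$H$ admits a $\chi \in \Irr_{p'}(H)$ with $\lev(\chi) = \alpha$ if and only if $\nu_p(\exp(P/P')) \geq \alpha$.} For the forward direction, the existence of such a $\chi$ means that the maximum $p$-rationality level on $\Irr_{p'}(H)$ is at least $\alpha$, so Theorem \ref{IN01} applied with the positive integer $\alpha - 1$ rules out $\exp(P/P') \leq p^{\alpha - 1}$ and forces $\nu_p(\exp(P/P')) \geq \alpha$. For the converse, the same instance of Theorem \ref{IN01} supplies a $p'$-degree irreducible character of level at least $\alpha$, and then Theorem \ref{thm:continuity} — whose hypothesis $\alpha \geq 2$ is precisely ours — refines this to the existence of one of level exactly $\alpha$.

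Applying the equivalence to both $H = G$ and $H = M$ with the shared Sylow $P$ yields the theorem immediately. The only delicate point, and the sole place where the hypothesis $\alpha \geq 2$ is essential, is avoiding the exceptional regime of Theorem \ref{IN01} in which the maximum level equals $0$ while $\nu_p(\exp(P/P')) = 1$; since Theorem \ref{IN01} is only ever invoked here with parameter $\alpha - 1 \geq 1$, this exceptional case never enters the argument and no further case analysis is needed.
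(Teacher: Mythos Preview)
Your proof is correct and follows precisely the route the paper indicates: the result is deduced from Theorems~\ref{IN01} and~\ref{thm:continuity} via the shared Sylow $p$-subgroup of $G$ and $M$, exactly as in \cite[Theorem~2.5]{Hung22}. Your reduction of both sides to the numerical condition $\nu_p(\exp(P/P'))\geq\alpha$ is the intended argument.
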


The McKay-Navarro conjecture has a block-wise version called the
Alperin-McKay-Navarro conjecture \cite{Navarro04}. In the same way
that the continuity property of the $p$-rationality level of
$p'$-degree irreducible characters of $G$ follows from the
McKay-Navarro conjecture, the following follows from the
Alperin-McKay-Navarro conjecture. This was first observed in
\cite{Hung22}.

\begin{conjecture}[\cite{Hung22}]
Let G be a finite group and $p$ be a prime. If the principal
$p$-block of $G$ contains an irreducible character of degree coprime
to $p$ with $p$-rationality level $\alpha\geq 2$, then it contains
irreducible $p'$-degree characters of every level from $2$ to
$\alpha$.
\end{conjecture}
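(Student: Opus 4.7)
The plan is to mirror the argument deducing Theorem~\ref{thm:continuity} from the McKay-Navarro conjecture, but applied block-wise via the Alperin-McKay-Navarro conjecture. For the first step, fix $P \in \Syl_p(G)$ and invoke the Alperin-McKay-Navarro conjecture to obtain an $\mathcal{H}_p$-equivariant bijection
\[
\Irr_{p'}(B_0(G)) \longrightarrow \Irr_{p'}(B_0(\bN_G(P))),
\]
where the target is the Brauer correspondent of $B_0(G)$ by Brauer's third main theorem. Since $\sigma_\alpha$ lies in $\Gal(\QQ_{|G|}/\QQ_{p^\alpha|G|_{p'}}) \subseteq \mathcal{H}_p$ for every $\alpha \ge 1$, this bijection commutes with each $\sigma_\alpha$ and therefore preserves the $p$-rationality level. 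The conjecture thus reduces to establishing continuity of levels inside $\Irr_{p'}(B_0(H))$, where $H := \bN_G(P)$ has $P$ as a normal Sylow $p$-subgroup.

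For the second step I would exploit the structure of $H$. By Schur-Zassenhaus, write $H = P \rtimes K$ with $|K|$ coprime to $p$, so $H$ is $p$-solvable. Clifford theory parametrizes $\Irr_{p'}(H)$ by pairs $([\lambda], \mu)$, where $\lambda$ is a linear character of $P$ (taken up to $K$-conjugacy) and $\mu \in \Irr(K_\lambda)$ for the inertia subgroup $K_\lambda = \Stab_K(\lambda)$. The associated character $\chi_{\lambda,\mu} = (\widetilde{\lambda}\cdot\widetilde{\mu})^{H}$ is built from the canonical extension $\widetilde{\lambda}$ of $\lambda$ to $P \rtimes K_\lambda$ of $p$-power determinantal order and the inflation $\widetilde{\mu}$ of $\mu$. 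Since $|K|$ is coprime to $p$, $\QQ(\mu)$ has conductor coprime to $p$, while $\QQ(\lambda)$ has conductor $o(\lambda)$, a power of $p$. The inclusion $\QQ(\chi_{\lambda,\mu}) \subseteq \QQ(\lambda)\cdot\QQ(\mu)$ yields $\lev(\chi_{\lambda,\mu}) \le \nu_p(o(\lambda))$, and a direct analysis of the values of $\chi_{\lambda,\mu}$ on $p$-elements of $H$ shows the bound is tight for generic $(\lambda,\mu)$.

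The final and most delicate step is to show that if $\chi_{\lambda,\mu} \in \Irr_{p'}(B_0(H))$ has level $\alpha \ge 2$, then for every intermediate $2 \le \beta \le \alpha$ there exists a character in $\Irr_{p'}(B_0(H))$ of level exactly $\beta$. The natural construction replaces $\lambda$ by its power $\lambda^{p^{\alpha-\beta}}$, producing a linear character of $P$ of order $p^\beta$ with an enlarged $K$-stabilizer, and pairs it with an appropriate $\mu' \in \Irr(K_{\lambda^{p^{\alpha-\beta}}})$. The main obstacle is twofold: first, verifying that the new pair still parametrizes a character of the principal block, for which one needs a usable criterion for $B_0(H)$-membership in the semidirect product setting (for instance via the Fong-Reynolds reduction to the inertia subgroup combined with Brauer's theory of blocks with normal defect groups, or via the Gajendragadkar-Isaacs factorization of characters of the $p$-solvable group $H$ into $p$-special and $p'$-special pieces); and second, ensuring that Galois averaging during the induction does not cause the level of the new character to drop below $\beta$, which should be controllable by a careful choice of $\mu'$ that distinguishes enough Galois conjugates of $\lambda^{p^{\alpha-\beta}}$. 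I expect the interplay of these block-theoretic and Galois-theoretic constraints to be the crux of the proof.
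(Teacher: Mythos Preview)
The statement you address is listed in the paper as an open \emph{conjecture}, not a theorem; the paper contains no proof of it. What the paper does record (citing \cite{Hung22}) is that the conjecture is a consequence of the Alperin--McKay--Navarro conjecture, and your proposal is really a sketch of \emph{that implication} rather than an unconditional argument. Your Step~1 is precisely this reduction: AMN applied to the principal block, whose Brauer correspondent is again principal by Brauer's third main theorem, yields an $\mathcal{H}_p$-equivariant bijection $\Irr_{p'}(B_0(G))\to\Irr_{p'}(B_0(\bN_G(P)))$ preserving $p$-rationality levels $\ge 2$, so continuity in $B_0(G)$ reduces to continuity in $B_0(H)$ for $H:=\bN_G(P)$. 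This matches exactly what the paper asserts.

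Your Step~3, however, is more laborious than necessary. The first obstacle you flag---keeping the newly constructed characters inside the principal block of $H$---disappears once you pass to $\bar H := H/O_{p'}(H)$. Since $\bar H$ still has $\bar P\cong P$ as a normal Sylow $p$-subgroup and $O_{p'}(\bar H)=1$, one checks that $C_{\bar H}(\bar P)\le \bar P$, whence $\bar H$ has a \emph{unique} $p$-block. Inflation then identifies $\Irr_{p'}(B_0(H))$ with all of $\Irr_{p'}(\bar H)$, and the block constraint evaporates. What remains is exactly the \emph{non}-block continuity (Theorem~\ref{thm:continuity}) for the group $\bar H$, which has a normal Sylow $p$-subgroup and is handled by the same local argument already needed in \cite{Hung22} to deduce Theorem~\ref{thm:continuity} from the McKay--Navarro conjecture. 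So the ``crux'' you anticipate is not a genuinely new block/Galois interaction; the Fong--Reynolds reduction and the $p$-special factorisation you propose are not needed, and the second obstacle (level-dropping under induction) is the only real issue---and it is identical to the one arising in the ordinary, blockless setting.
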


This has been verified for $p=2$ by recent work of G.~Malle,
J.\,M.~Mart\'{\i}nez and C.~Vallejo \cite{MMV24}. They also reduced
it to a slightly stronger problem for simple groups, but the case of
odd primes remains open.

The continuity property fails at level $1$ in general. There are
many examples showing that all the $p'$-degree almost $p$-rational
characters of a finite group may become $p$-rational. Conjecture~B
of \cite{MMV24} offers an explanation for this phenomenon. Here, the
Frattini subgroup $\Phi(X)$ of a finite group $X$ is the
intersection of all maximal subgroups of $X$.

\begin{conjecture}[Malle-Mart\'{\i}nez-Vallejo]\label{conj:MMV}
Let $G$ be a finite group, $p$ a prime, and $P\in\Syl_p(G)$. Write
$K: = \bN_G(P)/\Phi(P)$ and $Q: = P/\Phi(P)$. Then the following are
equivalent:
\begin{enumerate}[\rm(i)]
\item No irreducible character of
G of degree coprime to $p$ has $p$-rationality level $1$.

\item For every $1\neq y \in Q$ we have:
\begin{itemize}
\item[(a)] $\bN_K(\langle y\rangle )/\bC_K(\langle y\rangle)\cong \Aut(\langle y\rangle)$, and

\item[(b)] $\bN_K(\langle y\rangle)/Q$ acts trivially on the set of conjugacy classes of
$\bC_K(\langle y\rangle)/Q$.
\end{itemize}
\end{enumerate}
\end{conjecture}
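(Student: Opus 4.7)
The natural strategy is a three-stage reduction: from $G$ to the Sylow normalizer $\bN_G(P)$, then from $\bN_G(P)$ to the quotient $K = \bN_G(P)/\Phi(P)$, and finally a direct Clifford-theoretic analysis inside $K$ to translate (i) into (ii).

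For the first stage, one expects (i) for $G$ to be equivalent to (i) for $\bN_G(P)$ in the spirit of McKay--Navarro. By Theorem~\ref{thm:RSF2}, the numbers of $p'$-degree characters at every level $\geq 2$ already match, as do the counts of almost $p$-rational characters; combined with the McKay bijection, the equivalence at level $1$ follows once one additionally knows the matching of $p$-rational counts on the two sides. For $p=2$ this is delivered by the full McKay--Navarro conjecture~\cite{RS25a}; for odd $p$ it is a currently open refinement. In the second stage, any $\chi \in \Irr_{p'}(\bN_G(P))$ covers a linear character $\psi \in \Lin(P)$ (forced, since $\psi(1)$ is a power of $p$ dividing $\chi(1)$), and the $\sigma_1$-behaviour of $\chi$ is controlled by the image of $\psi$ in $\Lin(Q) = \Lin(P/\Phi(P))$. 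A Clifford--Gallagher parametrisation of the characters of $\bN_G(P)$ over $\psi$ then yields (i) for $\bN_G(P)$ $\Leftrightarrow$ (i) for $K$.

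The concrete heart of the argument is Stage three. Writing $K = Q \rtimes H$ via Schur--Zassenhaus, with $Q$ elementary abelian of exponent $p$ and $H = \bN_G(P)/P$ a $p'$-group, one has $\exp(K)_p = p$, so every $\chi \in \Irr(K)$ automatically has level at most $1$; thus (i) for $K$ becomes the requirement that every $\chi \in \Irr_{p'}(K)$ be $p$-rational, i.e.\ $\sigma_1$-invariant. By Clifford theory on $Q \trianglelefteq K$, such $\chi$ is parametrised by a $K$-orbit $\mathcal{O} \subseteq \hat{Q}$ together with an irreducible character of the stabilizer $K_\lambda$ lying over $\lambda \in \mathcal{O}$. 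The Galois group $\Gal(\QQ_p/\QQ) \cong (\ZZ/p\ZZ)^\times$ acts on $\hat{Q}$ by $\lambda \mapsto \lambda^a$, and the $p$-rationality of $\chi$ splits into two conditions: (a) the orbit $\mathcal{O}$ is closed under this action, and (b) the chosen inertial character is invariant under the resulting twist. Under the $K$-equivariant duality $\hat{Q} \leftrightarrow Q$, the orbit stability in (a) translates exactly into condition (ii)(a). For (b), fix $k_a \in \bN_K(\langle y\rangle)$ with $k_a y k_a^{-1} = y^a$ and apply a Jordan-decomposition argument to elements $g = y h$ with $h \in \bC_H(y)$: $p$-rationality demands that $h$ and $k_a^{-1} h k_a$ be $\bC_K(\langle y\rangle)$-conjugate. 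Since $\bC_K(\langle y\rangle) = Q \rtimes \bC_H(y)$, Schur--Zassenhaus reduces $p'$-conjugacy in $\bC_K(\langle y\rangle)$ to conjugacy in the quotient $\bC_K(\langle y\rangle)/Q$, which is exactly condition (ii)(b).

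\textbf{Main obstacle.} The decisive difficulty is Stage one for odd $p$, where distinguishing levels $0$ and $1$ requires a refinement of the Ruhstorfer--Schaeffer Fry equivariance that is presently out of reach; a viable workaround, as in~\cite{MMV24} for $p=2$, is to reduce Conjecture~\ref{conj:MMV} itself to simple groups and verify a stronger statement type-by-type. Within Stage three, the subtle point is the clean separation of orbit stability from inertia-character compatibility, and this is exactly what makes (ii)(a) and (ii)(b) the natural two-clause local criterion.
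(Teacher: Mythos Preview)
There is no proof in the paper to compare against: the statement is presented as an open conjecture, not as a theorem. The paper's only comment is that \cite[\S5]{MMV24} shows Conjecture~\ref{conj:MMV} to be a \emph{consequence of the McKay--Navarro conjecture}, and hence that it holds for $p$-solvable groups and for the families of simple groups where McKay--Navarro is known.

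Your plan is entirely consistent with that picture. Your Stage~1 is precisely the step that requires the McKay--Navarro conjecture (or at least the equality of the numbers of $p$-rational $p'$-degree characters on the global and local sides), and you correctly flag that for odd~$p$ this is currently unavailable; Theorem~\ref{thm:RSF2} handles levels $\ge 2$ and the almost-$p$-rational count, but not the level-$0$/level-$1$ split. Your Stages~2 and~3 sketch the passage from $\bN_G(P)$ to $K=\bN_G(P)/\Phi(P)$ and the Clifford-theoretic translation of ``no level-$1$ character of~$K$'' into conditions (ii)(a) and (ii)(b); this is the local analysis carried out in \cite{MMV24}, and your outline of it (orbit stability under $(\ZZ/p\ZZ)^\times$ giving~(a), inertia-character compatibility giving~(b)) is on the right track.

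So your proposal is not a proof of the conjecture, and you already know this: it is a reconstruction of the implication ``McKay--Navarro $\Rightarrow$ Conjecture~\ref{conj:MMV}'' established in \cite{MMV24}, together with an honest identification of the missing ingredient for odd~$p$. That matches exactly how the paper treats the statement.
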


As proved in \cite[\S5]{MMV24}, Conjecture~\ref{conj:MMV} is also a
consequence of the McKay-Navarro conjecture. In particular, it holds
for $p$-solvable groups, as well as sporadic groups, symmetric and
alternating groups, and simple groups of Lie type in defining
characteristic \cite{BN21,Navarro04,R21}.


\section{$p$-Parts of character degrees}

As seen in Section~\ref{sec:cyclotomic-deficiency}, there are
connections between the conductor and the degree of an irreducible
character. We now fix a prime $p$ and examine the (local)
relationship between the $p$-parts of the conductor and the degree.

In this section and the next, if $K/\QQ$ and $L/\QQ$ are finite
abelian extensions, we write $KL$ to denote the smallest subfield of
$\overline{\QQ}$ that contains both $K$ and $L$. (This field is
often called the \emph{compositum} of $K$ and $L$.)

In \cite{Navarro-Tiep21}, Navarro and Tiep put forward the
following, which, if true, would classify all possible abelian
number fields that can be realized as the field of values of a
$p'$-degree irreducible character.

\begin{conjecture}[Navarro-Tiep]\label{conj:NT21}
Let $\FF$ be an abelian extension of $\QQ$. Then $\FF$ is the field
of values of a $p'$-degree irreducible character of some finite
group if and only if
\[ [\FF\QQ_{c(\FF)_p} : \FF] \text{ is not divisible by } p.\]
\end{conjecture}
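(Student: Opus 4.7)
Proof plan: Establish both directions separately; both appear substantial.

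For the ``if'' direction, given $\FF$ with $p\nmid [\FF\QQ_{c(\FF)_p}:\FF]$, the goal is to exhibit a finite group $G$ and $\chi\in\Irr_{p'}(G)$ with $\QQ(\chi) = \FF$. The Fein-Gordon construction \cite{FG72} furnishes a solvable semidirect product $C_n \rtimes H$ (with $n = c(\FF)$ and $H = \Gal(\QQ_n/\FF)$) carrying a character of field of values $\FF$ and degree $|H|$, but $|H|$ need not be coprime to $p$. For instance, with $p=3$ and $\FF = \QQ(\sqrt{-7})$ the Fein-Gordon degree is $3$, while the conjecture demands a $3'$-degree realization, which is provided by the degree-$10$ irreducible characters of $\Al_7$. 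Thus no purely solvable construction can suffice in general, and the plan is twofold: first, whenever possible, to refine the Fein-Gordon semidirect product---exploiting the hypothesis that $H$ projects to a $p'$-subgroup of $(\ZZ/c(\FF)_p)^*$---into a construction yielding degree $|H|_{p'}$ with field of values still equal to $\FF$; second, when no solvable construction works, to appeal to $p'$-degree characters of alternating and Lie-type groups, whose fields of values have been described in works such as \cite{Navarro-Tiep21}.

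For the ``only if'' direction, the plan is to prove by induction on $|G|$ that every $\chi\in\Irr_{p'}(G)$ satisfies $p\nmid [\QQ(\chi)\QQ_{c(\chi)_p}:\QQ(\chi)]$. Clifford theory together with an inductive reduction in the spirit of \cite{Navarro-Tiep19, Navarro-Tiep21, BKNT24} should bring us to the case where $\chi$ is quasi-primitive and $G$ is (almost) quasisimple. The remaining task is to verify the divisibility for every $p'$-degree irreducible character of every quasisimple group: for alternating groups via hook-length parameterizations of $p'$-degree characters and the Murnaghan-Nakayama rule; for sporadic groups by direct inspection of character tables; and for groups of Lie type via Deligne-Lusztig parameterization and the Jordan decomposition of characters.

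The principal obstacle is the quasisimple analysis for groups of Lie type in non-defining characteristic. There one must trace how the $p$-part of the conductor of a $p'$-degree character transforms under the Galois elements $\sigma_\alpha$ of Section~\ref{sec:p-rationality}, in terms of their action on the semisimple classes of the dual group that parameterize $p'$-degree characters via Jordan decomposition. The analogous $p=2$ result in \cite{Navarro-Tiep21} relied crucially on the Feit-Thompson theorem to pass to solvable groups, a tool unavailable for odd $p$. The hope is to combine the framework developed there with the $\mathcal{H}_0$-equivariant McKay bijection of Ruhstorfer--Schaeffer Fry (Theorem~\ref{thm:RSF}), which transports Galois-theoretic questions about $\Irr_{p'}(G)$ to the structurally simpler normalizer $\bN_G(P)$ whose $p'$-degree characters are more directly amenable to Clifford-theoretic analysis.
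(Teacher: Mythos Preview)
The statement you are attempting to prove is an \emph{open conjecture}; the paper contains no proof of it. What the paper records is the current status: Navarro and Tiep \cite{Navarro-Tiep21} established the ``if'' direction for all primes $p$, settled both directions when $p=2$, and reduced the remaining ``only if'' direction for odd $p$ to a statement about quasisimple groups. The paper further notes that the ``only if'' direction would follow from Conjecture~\ref{conj:NT21-more} (proved for $p$-solvable groups in \cite{Isaacs-Navarro24}), and hence also from an affirmative answer to Question~\ref{conj:ell}.

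Your proposal is therefore not a proof but a recapitulation of the known reduction together with a research outline for the missing quasisimple step. Your discussion of both directions tracks the existing literature: the ``if'' direction is already done, and your inductive/Clifford reduction for ``only if'' is precisely what \cite{Navarro-Tiep21} carried out. The genuine gap is the verification for quasisimple groups of Lie type in non-defining characteristic at odd $p$, and you correctly flag this as the principal obstacle---but flagging it is not the same as closing it.

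Your proposed route through Theorem~\ref{thm:RSF} deserves a caution. The Ruhstorfer--Schaeffer~Fry bijection is $\mathcal{H}_0$-equivariant, which controls the $p$-rationality \emph{level} of $p'$-degree characters (for levels $\ge 2$), but the quantity $[\QQ(\chi)\QQ_{c(\chi)_p}:\QQ(\chi)]$ depends on how $\QQ_{c(\chi)_p}$ sits inside $\QQ(\chi)$, not merely on $\lev(\chi)$. An $\mathcal{H}_0$-equivariant bijection need not preserve $\QQ(\chi)$ itself, so there is no evident mechanism by which passing to $\bN_G(P)$ via Theorem~\ref{thm:RSF} transports the required divisibility. (It is true that $\bN_G(P)$ is $p$-solvable, and the conjecture is known there via \cite{Isaacs-Navarro24}; the difficulty is that the bijection does not carry enough field-of-values information to transfer the conclusion back to $G$.) In short, your plan is a reasonable sketch of where the problem stands, but it does not supply the missing ingredient, and the paper does not either.
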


In the same paper, the authors proved the ``if'' direction for all
$p$, and both directions for $p=2$. Moreover, the other direction
was reduced to quasisimple groups. The result for $p=2$ takes a
nicer form: for an odd-degree irreducible character $\chi$ with
conductor $c(\chi)=2^am$, where $m$ is odd, $\QQ_{2^a}\subseteq
\QQ(\chi)$. This result vastly improves on a previous result of
Isaacs et. al. \cite{ILNT}, where it was shown that if $\chi$ is an
odd-degree irreducible character that is not $2$-rational, then
$\QQ(\chi)$ contains the imaginary unit $i$.

We remark that Conjecture~\ref{conj:NT21} is a consequence of the
McKay-Navarro conjecture for $G$ and $p$ when $|G|_{p'}$ has no
prime divisor congruent to $1$ modulo $p$, but no other connections
are currently known. See \cite[Theorem~1.2]{G-Y23} for further
details.

The ``only if'' implication of Conjecture~\ref{conj:NT21} would
follow from the following deep conjecture.

\begin{conjecture}[Navarro-Tiep]\label{conj:NT21-more}
Let $p$ be a prime, $G$ a finite group, and $P\in\Syl_p(G)$. Let
$\chi$ be a $p'$-degree irreducible character of $G$. Then
\[\QQ_{c(\chi)_p}=\QQ_p(\chi_P).\]
\end{conjecture}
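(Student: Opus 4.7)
The plan is to separate the equality into two containments. The inclusion $\QQ_p(\chi_P)\subseteq \QQ_{c(\chi)_p}$ is the easier one. Each value $\chi(x)$ for $x\in P$ is a sum of $p$-power roots of unity and also lies in $\QQ(\chi)\subseteq \QQ_{c(\chi)}= \QQ_{c(\chi)_p}\cdot \QQ_{c(\chi)_{p'}}$. By linear disjointness of $\QQ_{p^\infty}$ and $\QQ_{c(\chi)_{p'}}$ over $\QQ$, it follows that $\chi(x)\in \QQ_{c(\chi)_p}$, whence $\QQ(\chi_P)\subseteq \QQ_{c(\chi)_p}$. When $\chi$ is not $p$-rational, $\QQ_p\subseteq \QQ_{c(\chi)_p}$, so adjoining $\QQ_p$ preserves the containment.

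The content of the conjecture lies in the reverse inclusion $\QQ_{c(\chi)_p}\subseteq \QQ_p(\chi_P)$. I would reformulate it in Galois-theoretic terms: both sides are intermediate fields of $\QQ_{p^\infty}/\QQ_p$, and the desired containment translates to saying that every $\sigma\in\Gal(\QQ_{p^\infty}/\QQ_p)$ fixing all of the values $\chi(x)$ for $x\in P$ must fix $\QQ_{c(\chi)_p}$ pointwise. If one lifts $\sigma$ to $\hat\sigma\in\Gal(\QQ^{ab}/\QQ)$ by acting as the identity on all $p'$-roots of unity, then $\hat\sigma$ belongs to the subgroup $\mathcal{H}_0$ of Ruhstorfer--Schaeffer Fry, and the claim becomes: if $\hat\sigma$ fixes $\chi_P$ pointwise, then $\hat\sigma(\chi)=\chi$ as a character of $G$.

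To this I would apply Theorem~\ref{thm:RSF}, which provides an $\mathcal{H}_0$-equivariant bijection $\Omega\colon\Irr_{p'}(G)\to \Irr_{p'}(\bN_G(P))$. Setting $N:=\bN_G(P)$ and $\chi':=\Omega(\chi)$, one has $\hat\sigma(\chi)=\chi$ if and only if $\hat\sigma(\chi')=\chi'$. The conjecture can now be verified directly on the local group $N$: because $P\trianglelefteq N$ and $p\nmid \chi'(1)$, Clifford theory forces $\chi'_P=e\sum_i \theta_i$ with $\theta$ a \emph{linear} character of $P$ (every irreducible constituent of $\chi'_P$ has $p$-power degree dividing $\chi'(1)$, hence degree~$1$) and the $\theta_i$ its $N$-conjugates. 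Since $\sigma\colon\zeta\mapsto \zeta^s$ sends each linear $\theta_i$ to $\theta_i^s$, a short direct argument shows that $\hat\sigma$ fixes $\chi'$ exactly when $\sigma$ permutes the $N$-orbit of $\theta$, equivalently when $\sigma$ fixes the restriction $\chi'_P$.

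This leaves the crucial compatibility gap: one needs $\sigma$ to fix $\chi_P$ if and only if $\sigma$ fixes $\chi'_P$. The equivariance of $\Omega$ in Theorem~\ref{thm:RSF} only commutes with the $\mathcal{H}_0$-action on the characters themselves, not on their restrictions to a Sylow $p$-subgroup. This is the main obstacle, and closing it appears to demand either a strengthening of the inductive conditions underlying \cite{RS25} to track the $P$-restrictions through the bijection, or the construction of an entirely different correspondence --- plausibly block-theoretic, built from canonical characters of maximal-defect blocks or a refinement in the spirit of Brou\'{e}'s conjecture --- whose very construction respects restriction to $P$. As with other refinements of McKay--Navarro, I expect the final step to require a case-by-case verification at the level of quasisimple groups, which is where the bulk of the technical difficulty will lie.
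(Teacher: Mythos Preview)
The statement you are attempting to prove is Conjecture~\ref{conj:NT21-more}, which is \emph{open}: the paper does not prove it, and records only the $p$-solvable case due to Isaacs and Navarro \cite{Isaacs-Navarro24}. So there is no ``paper's proof'' to compare against; your write-up is best read as a strategy and an honest diagnosis of where it stalls.

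Your easy inclusion $\QQ_p(\chi_P)\subseteq \QQ_{c(\chi)_p}$ is correct (this is essentially \cite[Lemma~7.1]{Navarro-Tiep21}); note, as the paper remarks right after the conjecture, the statement is only meaningful for $\lev(\chi)\ge 2$, and in fact literally fails for $p$-rational $\chi$ at odd $p$, so your caveat ``when $\chi$ is not $p$-rational'' is necessary, not cosmetic.

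For the hard direction, your idea of transporting the problem to $N=\bN_G(P)$ via the $\mathcal{H}_0$-equivariant bijection of Theorem~\ref{thm:RSF} is natural. The local step does go through, though not by the ``short direct argument'' you assert: $N$ has $P$ normal, hence is $p$-solvable, and the conjecture for $N$ is exactly the Isaacs--Navarro theorem \cite{Isaacs-Navarro24}. Your Clifford-theoretic sketch only shows that $\hat\sigma$ fixing $\chi'_P$ forces $\hat\sigma$ to permute the $N$-orbit of $\theta$; lifting this to $\hat\sigma(\chi')=\chi'$ still requires nontrivial work above the inertia group, which is what \cite{Isaacs-Navarro24} supplies.

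The gap you isolate is genuine and is precisely why the conjecture remains open. What you would need---a McKay-type bijection compatible with restriction to $P$---is exactly Conjecture~\ref{conj:Navarro-Tiep21-more} (Navarro--Tiep's Condition~D), itself an open strengthening of McKay--Navarro. The paper's own line of attack is different: rather than strengthen the bijection, it introduces the invariant $\ell(\chi_P)$ and shows in Theorem~\ref{thm:1implies3} that an affirmative answer to Question~\ref{conj:ell} (namely $\lev(\chi)=\ell(\chi_P)$) already implies Conjecture~\ref{conj:NT21-more}. That route sidesteps the compatibility problem by working with a numerical invariant of $\chi_P$ that is robust under the needed Galois action, at the cost of leaving Question~\ref{conj:ell} itself unresolved beyond the $p$-solvable case.
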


Of course this has no content if $\lev(\chi)\leq 1$, so we assume
that $\lev(\chi)\geq 2$. In that case, the equality
$\QQ_{c(\chi)_p}=\QQ_p(\chi_P)$ is equivalent to
$\lev(\chi)=\lev(\chi_P)$ for odd $p$. When $p=2$, one needs to show
furthermore that $i\in\QQ(\chi_P)$ on top of
$\lev(\chi)=\lev(\chi_P)$ in order to achieve the equality.

Conjecture~\ref{conj:NT21-more} has been proved recently by Isaacs
and Navarro \cite{Isaacs-Navarro24} for $p$-solvable groups. One of
the key ideas is to work with the following $p$-local invariant of
characters. For $\Psi\in\Char(G)$ and a nonnegative integer $i$, let
\[
\Delta_i(\Psi):=\sum_{\chi\in\Irr_i(G)} [\chi,\Psi] \chi,
\]
where $\Irr_i(G):=\{\chi\in\Irr(G): \lev(\chi)=i\}$, and define
\[
\ell(\Psi):=\max\{i\in\ZZ_{\geq 0}: \Delta_i(\Psi)(1) \notequiv 0
\bmod p\}
\]
if one of $\Delta_i(\Psi)$ has degree not divisible by $p$. It turns
out that, at least for $p$-solvable groups, if $\chi$ is a
$p'$-degree irreducible character of $G$ of $p$-rationality level at
least 2 and $P\in\Syl_p(G)$, then
\[
\lev(\chi)=\lev(\chi_P)=\ell(\chi_P).
\]

It is not clear to us whether this remains true for all finite
groups.

\begin{question}\label{conj:ell}
Let $p$ be a prime, $G$ a finite group, and $P\in\Syl_p(G)$. Let
$\chi\in\Irr_{p'}(G)$ with $\lev(\chi)\geq 2$. Is it true that
\[\lev(\chi)=\ell(\chi_P)?\]
\end{question}

\begin{theorem}\label{thm:1implies3}
An affirmative answer to Question~\ref{conj:ell} implies Conjecture~
\ref{conj:NT21-more}.
\end{theorem}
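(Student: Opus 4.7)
The plan is to exploit the equivalence already noted after Conjecture~\ref{conj:NT21-more}: for $\alpha:=\lev(\chi)\geq 2$, the desired identity $\QQ_{c(\chi)_p}=\QQ_p(\chi_P)$ reduces, when $p$ is odd, to the single equality $\lev(\chi)=\lev(\chi_P)$, and when $p=2$, to this equality together with $i\in\QQ(\chi_P)$. Assuming an affirmative answer to Question~\ref{conj:ell}, I would establish the two inclusions $\QQ_{c(\chi)_p}\subseteq\QQ_p(\chi_P)$ and $\QQ_p(\chi_P)\subseteq\QQ_{c(\chi)_p}$ directly, handling both primes uniformly.

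For the first inclusion I would extract from $\ell(\chi_P)=\alpha$ a linear constituent of $\chi_P$ of level $\alpha$. Because $P$ is a $p$-group, every non-linear $\psi\in\Irr(P)$ has degree divisible by $p$; hence the only constituents of $\chi_P$ that contribute modulo $p$ to $\Delta_\alpha(\chi_P)(1)$ are linear. The non-vanishing of $\Delta_\alpha(\chi_P)(1)\bmod p$ therefore forces the existence of a linear character $\lambda\in\Irr(P)$ of level $\alpha$ appearing in $\chi_P$. Since $\lambda$ has $p$-power order, $\lev(\lambda)=\alpha$ forces $\ord(\lambda)=p^\alpha$ and hence $\QQ(\lambda)=\QQ_{p^\alpha}$. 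This yields $\QQ_{p^\alpha}\subseteq\QQ(\chi_P)\subseteq\QQ_p(\chi_P)$, which is $\QQ_{c(\chi)_p}\subseteq\QQ_p(\chi_P)$. As a by-product when $p=2$, we also get $i\in\QQ_4\subseteq\QQ_{2^\alpha}\subseteq\QQ(\chi_P)$, so the extra condition needed in that case is automatic.

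For the reverse inclusion I would use Galois equivariance. Since $\lev(\chi)=\alpha$, the character $\chi$ is $\sigma_\alpha$-invariant, and this invariance descends to $\chi_P=(\chi^{\sigma_\alpha})_P=\chi_P^{\sigma_\alpha}$. Because $\chi_P$ takes values in $\QQ_{p^\infty}$, the field $\QQ(\chi_P)$ is contained in the fixed subfield of $\sigma_\alpha$ inside $\QQ_{p^\infty}$. A short Galois computation identifies this fixed subfield with $\QQ_{p^\alpha}$ in both the odd $p$ case (where $1+p^\alpha$ generates the unique subgroup of $(\ZZ/p^n)^\times$ of order $p^{n-\alpha}$) and the $p=2$ case with $\alpha\geq 2$ (where $1+2^\alpha$ lies in the pro-cyclic factor $\langle 5\rangle$ of $(\ZZ/2^n)^\times$ and its fixed field once again comes out to $\QQ_{2^\alpha}$). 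Hence $\QQ_p(\chi_P)\subseteq\QQ_{p^\alpha}=\QQ_{c(\chi)_p}$, and combined with the first inclusion this gives the conjectured identity.

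I do not anticipate a real obstacle here: the hypothesis $\ell(\chi_P)=\alpha$ is tailored precisely to produce a linear character of $P$ at the top level, while the $\sigma_\alpha$-invariance inherited from $\chi$ pins the top level from above. The only delicate point is the description of the fixed field of $\sigma_\alpha$ for $p=2$, which is straightforward once we use $\alpha\geq 2$ (the case $\alpha=1$ for $p=2$ is in any event not covered by Question~\ref{conj:ell}).
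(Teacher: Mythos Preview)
Your second inclusion $\QQ_p(\chi_P)\subseteq\QQ_{p^\alpha}$ via $\sigma_\alpha$-invariance is fine and is essentially the paper's use of \cite[Lemma~7.1]{Navarro-Tiep21}. The gap is in your first inclusion. From the existence of a single linear constituent $\lambda$ of $\chi_P$ with $\QQ(\lambda)=\QQ_{p^\alpha}$ you conclude $\QQ_{p^\alpha}\subseteq\QQ(\chi_P)$, but this implication is false in general: a constituent can have strictly larger field of values than the character itself. For a trivial instance, the regular character of a cyclic group of order $p$ is rational, yet each of its nontrivial linear constituents has field of values $\QQ_p$. More to the point here, if the level-$\alpha$ linear constituents of $\chi_P$ happened to be permuted freely by some nontrivial $\tau\in\Gal(\QQ_{p^\alpha}/\QQ_p(\chi_P))$, their total contribution to every value of $\chi_P$ would be $\tau$-invariant and nothing would force $\QQ_{p^\alpha}\subseteq\QQ(\chi_P)$.

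What is missing is precisely the use of the full $p'$-count rather than mere existence. You correctly observe that $\Delta_\alpha(\chi_P)(1)\not\equiv 0\pmod p$ says the number (with multiplicity) of level-$\alpha$ linear constituents of $\chi_P$ is prime to $p$; the paper now combines this with the fact that $\Gal(\QQ_{p^\alpha}/\QQ_p(\chi_P))$ has $p$-power order, since $[\QQ_{p^\alpha}:\QQ_p]=p^{\alpha-1}$. Any $\tau$ in this Galois group fixes $\chi_P$ and therefore permutes its level-$\alpha$ linear constituents in orbits of $p$-power length, so the $p'$-count forces a $\tau$-fixed constituent $\lambda$. But $\tau$ fixing a linear $\lambda$ with $\QQ(\lambda)=\QQ_{p^\alpha}$ means $\tau$ is trivial on $\QQ_{p^\alpha}$, whence $\QQ_p(\chi_P)=\QQ_{p^\alpha}$. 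Your proposal already contains every ingredient for this orbit argument; it simply skips it in favour of an inclusion that does not hold.
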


\begin{proof}
Recall that Conjecture \ref{conj:NT21-more} is trivial when
$\lev(\chi)\leq 1$. Let $\chi\in\Irr_{p'}(G)$ with
$\alpha:=\lev(\chi)=\ell(\chi_P)\geq 2$. By \cite[Lemma~4.2]{HS25},
it follows that $\lev(\chi_P)\geq 1$ and
\[\ell(\chi_P)= \lev(\chi)\geq \lev(\chi_P)\geq \ell(\chi_P),\]
and thus
\[\ell(\chi_P)=\lev(\chi_P)=\lev(\chi)=\alpha.\]

Now we just follow the arguments in the proof of
\cite[Theorem~4.4]{HS25}. By \cite[Lemma~7.1]{Navarro-Tiep21} we
have $\mathbb{Q}_p(\chi_P) \subseteq \mathbb{Q}_{p^\alpha}$. As
$[\mathbb{Q}_{p^\alpha} : \mathbb{Q}_p] = p^{\alpha-1}$, every
automorphism $\tau \in
\Gal(\mathbb{Q}_{p^\alpha}/\mathbb{Q}_p(\chi_P))$ has $p$-power
order. Moreover, $\tau$ fixes $\chi_P$, so it permutes the linear
constituents of $\chi_P$ of level $\alpha$.

Since $\ell(\chi_P) = \alpha$, the degree of $\Delta_\alpha(\chi_P)$
is not divisible by $p$, which means that the number of linear
constituents of $\chi_P$ of $p$-rationality level $\alpha$ is not
divisible by $p$. We deduce that at least one of these constituents
must be $\tau$-fixed. This forces $\tau$ to act trivially on
$\mathbb{Q}_{p^\alpha}$, as wanted.
\end{proof}

We record one consequence of Conjecture~\ref{conj:NT21-more} that is
still open.

\begin{conjecture}\label{conj:NT21-more-more}
Let $p$ be a prime, $G$ a finite group, and $P\in\Syl_p(G)$. Let
$\chi\in\Irr_{p'}(G)$. Then $\chi$ is almost $p$-rational if and
only if $\chi_P$ is almost $p$-rational. In particular, $\chi$ is
$2$-rational if and only if $\chi_P$ is $2$-rational.
\end{conjecture}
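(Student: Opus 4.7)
My plan is to derive Conjecture~\ref{conj:NT21-more-more} from Conjecture~\ref{conj:NT21-more}, treating the two directions of the equivalence separately.

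First, the implication ``$\chi$ almost $p$-rational $\Rightarrow\chi_P$ almost $p$-rational'' is in fact unconditional. The easy inclusion of Conjecture~\ref{conj:NT21-more}, namely $\QQ_p(\chi_P)\subseteq\QQ_{c(\chi)_p}$, is already proved in \cite[Lemma~7.1]{Navarro-Tiep21} and is exactly the input cited in the proof of Theorem~\ref{thm:1implies3}. In particular $\QQ(\chi_P)\subseteq\QQ_{c(\chi)_p}$, so $c(\chi_P)$ divides $c(\chi)_p$, and hence $\lev(\chi_P)\le\lev(\chi)$. Therefore $\lev(\chi)\le 1$ forces $\lev(\chi_P)\le 1$.

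For the reverse direction I would argue the contrapositive, invoking the full strength of Conjecture~\ref{conj:NT21-more}. Assume $\lev(\chi)\ge 2$. For $p$ odd, the conjectured equality $\QQ_{p^{\lev(\chi)}}=\QQ_p(\chi_P)$ cannot hold if $\QQ(\chi_P)\subseteq\QQ_p$, since then $\QQ_p(\chi_P)$ would collapse to $\QQ_p$, while the left-hand side is strictly larger. Therefore $\QQ(\chi_P)\not\subseteq\QQ_p$, forcing $\lev(\chi_P)\ge 2$. For $p=2$ the conjectured equality becomes $\QQ_{2^{\lev(\chi)}}=\QQ(\chi_P)$ because $\QQ_2=\QQ$, and the hypothesis $\lev(\chi)\ge 2$ then forces $\QQ(\chi_P)\supseteq\QQ_4=\QQ(i)$, so $\chi_P$ is not rational. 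The ``in particular'' assertion for $p=2$ requires no extra work: since no abelian number field has conductor congruent to $2\bmod 4$, one has $c(\chi)_2,\,c(\chi_P)\in\{1\}\cup\{4,8,16,\ldots\}$, so ``almost $2$-rational'' coincides with ``$2$-rational'' for both $\chi$ and $\chi_P$, and the main equivalence specializes as claimed.

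The genuine obstacle in this plan is Conjecture~\ref{conj:NT21-more} itself, which is open outside the $p$-solvable case settled in \cite{Isaacs-Navarro24}; once that equality is granted, the argument above is essentially formal. A more self-contained attack would try to produce, from $\lev(\chi)\ge 2$ alone, a linear constituent of $\chi_P$ of $p$-rationality level at least $2$ -- presumably via a Galois pigeonhole on $\sigma_\alpha$-orbits of linear constituents, in the spirit of the proof of Theorem~\ref{thm:1implies3}. Carrying this out without the precise equality $\QQ_{c(\chi)_p}=\QQ_p(\chi_P)$ is where I expect the real difficulty to lie.
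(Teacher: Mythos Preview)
Your derivation is correct and matches the paper's treatment: the paper does not give a proof of this statement (it is recorded as an open conjecture), but merely asserts that it is a consequence of Conjecture~\ref{conj:NT21-more}, which is exactly the implication you have spelled out. Your handling of both directions, including the observation that conductors are never $\equiv 2\pmod 4$ so that ``almost $2$-rational'' coincides with ``$2$-rational'', is sound.
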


We have seen the interplay between the $p$-rationality and $p$-Sylow
restriction. The following question, now on induced characters, is
another problem that we do not know the solution for now.

\begin{question}\label{prob:lev-induced}
Let $p$ be a prime, $G$ a finite group, and $M\leq G$ such that
$|G:M|$ is not divisible by $p$. Let $\psi$ be a $p'$-degree
irreducible character of $M$ with $\lev(\psi)\geq 2$. Is it true
that
\[\lev(\psi^G)=\lev(\psi)?\]
\end{question}

This is likely to be true. The following result provides a small
piece of supporting evidence.

\begin{theorem}
Question \ref{prob:lev-induced} has an affirmative answer when $M$
is $p$-solvable.
\end{theorem}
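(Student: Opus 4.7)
The upper bound $\lev(\psi^G) \le \alpha := \lev(\psi)$ is immediate from $\QQ(\psi^G) \subseteq \QQ(\psi)$, since induction commutes with the Galois action. For the reverse bound my plan is to adapt the argument from the proof of Theorem~\ref{thm:1implies3} to the (possibly reducible) restriction $(\psi^G)_P$. Since $M$ is $p$-solvable, the Isaacs--Navarro theorem~\cite{Isaacs-Navarro24}, which settles Conjecture~\ref{conj:NT21-more} for $p$-solvable groups, gives $\QQ_p(\psi_P) = \QQ_{p^\alpha}$ and $\ell(\psi_P) = \alpha$ for $P \in \Syl_p(M)$. Because $|G:M|$ is coprime to $p$, $P$ is also a Sylow $p$-subgroup of $G$. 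In particular, the number of level-$\alpha$ linear constituents of $\psi_P$, counted with multiplicity, is coprime to $p$.

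The main step is to control level-$\alpha$ linear constituents of $(\psi^G)_P$. Mackey's formula gives
\[
(\psi^G)_P = \sum_{g \in P \backslash G / M} \bigl({}^g\psi \big|_{\,{}^gM \cap P}\bigr)^{P},
\]
and I would split the sum into ``good'' cosets, those with $g \in \bN_G(P)M$ (equivalently ${}^gM \supseteq P$), and ``bad'' cosets, those for which ${}^gM \cap P$ is a proper subgroup of $P$. The good cosets contribute $\sum_{n \in \bN_G(P)/\bN_M(P)} {}^n\psi_P$, while the bad cosets produce characters of $P$ induced from proper subgroups. Counting level-$\alpha$ linear constituents with multiplicity, the contribution from good cosets equals $[\bN_G(P):\bN_M(P)] \cdot c$, where $c$ is the corresponding count for $\psi_P$; by the standard congruence $[\bN_G(P):\bN_M(P)] \equiv |G:M| \pmod{p}$ (from $P$-orbit counting on $G/M$) together with $\ell(\psi_P) = \alpha$, both factors are coprime to $p$. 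Assuming that the bad-coset contribution is divisible by $p$, the grand total is coprime to $p$, and then the argument of Theorem~\ref{thm:1implies3} applied verbatim to $(\psi^G)_P$ forces $\Gal(\QQ_{p^\alpha}/\QQ_p((\psi^G)_P))$ to be trivial, so $\QQ_p((\psi^G)_P) = \QQ_{p^\alpha}$. Since $\QQ((\psi^G)_P) \subseteq \QQ(\psi^G)$, this yields $\QQ_{p^\alpha} \subseteq \QQ_p(\psi^G)$, i.e., $\lev(\psi^G) \ge \alpha$.

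The main obstacle is the $p$-divisibility of the bad-coset contribution. Although the bad double cosets correspond to $P$-orbits on $G/M$ of size divisible by $p$ (the good cosets being exactly the $P$-fixed points), translating this divisibility into a $p$-divisible count of level-$\alpha$ linear constituents of the individual Mackey summands is not transparent. A natural approach is to group the bad double cosets into orbits under the conjugation action of $\bN_G(P)/P$ and to show via a Frobenius-reciprocity calculation that within each orbit the level-$\alpha$ linear contributions sum to a $p$-divisible total. An alternative route is to exploit the Gajendragadkar--Isaacs factorization $\psi = \lambda \cdot \mu$, with $\lambda$ linear $p$-special of order $p^\alpha$ and $\mu$ $p'$-special of $p'$-degree, in order to reduce the argument to the case where $\psi$ is a linear $p$-special character, for which the induced-character values are more explicitly controlled.
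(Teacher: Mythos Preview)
Your overall strategy coincides with the paper's: bound $\lev(\psi^G)$ from below via the Sylow restriction $(\psi^G)_P$, using Isaacs--Navarro's result $\ell(\psi_P)=\lev(\psi)=\alpha$ for the $p$-solvable group $M$. The paper packages this as the chain
\[
\lev(\psi^G)\ \ge\ \lev\bigl((\psi^G)_P\bigr)\ \ge\ \ell\bigl((\psi^G)_P\bigr)\ =\ \ell(\psi_P)\ =\ \lev(\psi),
\]
invoking \cite[Lemma~4.2]{HS25} for the first two inequalities and \cite[Lemma~4.3]{HS25} for the equality $\ell((\psi^G)_P)=\ell(\psi_P)$.

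The gap you flag is genuine and is precisely the content of that cited equality. Your Mackey decomposition and good-coset count are correct (including the congruence $[\bN_G(P):\bN_M(P)]\equiv[G:M]\pmod p$), so what remains is exactly to show that for each bad double coset the induced character $({}^g\psi|_{Q_g})^P$ with $Q_g\lneq P$ contributes a $p$-divisible number of level-$\alpha$ linear constituents. This does hold for $\alpha\ge 2$: one reduces via transitivity of induction to $Q$ maximal (hence normal of index $p$) in $P$; then for a linear $\lambda\in\Irr(Q)$ either $\lambda^P$ is irreducible of degree $p$ (no linear constituents), or $\lambda^P=\sum_{j=0}^{p-1}\mu\epsilon^{\,j}$ with $\epsilon\in\Irr(P/Q)$ of order $p$, and one checks that the number of $j$ with $\ord(\mu\epsilon^{\,j})=p^\alpha$ is $0$ or $p$ when $\alpha\ge 2$. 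Note that this fails for $\alpha=1$, matching the hypothesis $\lev(\psi)\ge 2$ in the question. Rather than redoing this from scratch, the paper simply cites the relevant lemma from \cite{HS25}.

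Two minor points. First, once you establish that the level-$\alpha$ linear count in $(\psi^G)_P$ is prime to $p$, you have directly shown $\ell((\psi^G)_P)\ge\alpha$; the detour through $\QQ_p((\psi^G)_P)=\QQ_{p^\alpha}$ is unnecessary, since $\lev(\psi^G)\ge\ell((\psi^G)_P)$ already finishes. Second, your proposed alternative via the factorization $\psi=\lambda\mu$ into $p$-special times $p'$-special requires $\psi$ to be quasi-primitive, so an additional reduction step would be needed before that route becomes available.
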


\begin{proof}
We have $\lev(\psi^G)\geq\lev((\psi^G)_P)\geq \ell((\psi^G)_P)$, by
\cite[Lemma~4.2]{HS25}. Moreover, $\ell((\psi^G)_P)=\ell(\psi_P)$,
by following the proof of \cite[Lemma~4.3]{HS25}. By the work in
\cite{Isaacs-Navarro24} and the $p$-solvability of $M$, as mentioned
above, we also have $\lev(\psi)=\ell(\psi_P)$. Altogether, we deduce
that $\lev(\psi^G)\geq \lev(\psi)$. As $\QQ(\psi^G)\subseteq
\QQ(\psi)$ by the induced-character formula, the result follows.
\end{proof}

We already mentioned a result from \cite{ILNT} stating that if
$\chi$ is an odd-degree irreducible character, then either $\chi$ is
$2$-rational or the field $\mathbb{Q}(\chi)$ contains the imaginary
unit $i$. It follows that, if such a character has quadratic field
of values $\mathbb{Q}(\sqrt{d})$ for some square-free integer $d \ne
-1$, then necessarily $d \equiv 1 \pmod{4}$.

Further investigation into other quadratic number fields appears to
reveal a deeper connection: the quadratic irrationality of an
irreducible character $\chi$ reflects not only the parity of
$\chi(1)$ but also the higher $2$-divisibility of its degree. We
observe that $\mathbb{Q}(\sqrt{\pm 2})$ and $\mathbb{Q}(\sqrt{d})$
with $d \equiv 3 \pmod{4}$ can occur as fields of values of
irreducible characters $\chi$ whose degrees are exactly divisible by
$2$ (that is, divisible by $2$ but not by $4$). However, an
irreducible character with field of values $\mathbb{Q}(\sqrt{d})$
for $d \ne \pm 2$ even appears always to have degree divisible by
$4$.

The following, which generalizes
Conjecture~\ref{conj:Navarro-Tiep21-more}, provides an initial clue
of this phenomenon.

\begin{question}[Isaacs-Navarro]\label{conj:Isaacs-Navarro24}
Let $p$ be a prime, $G$ a finite group, and $P\in\Syl_p(G)$. Let
$\chi\in\Irr(G)$. Is it true that
\[[\QQ_{c(\chi)_p}:\QQ_p(\chi_P)]\leq \chi(1)_p?\]
\end{question}

This far-reaching problem, appeared as Question~D in
\cite{Isaacs-Navarro24}, implies the ``only if'' direction of the
next conjecture, which, if true, would classify the abelian number
fields that can arise as fields of values of irreducible characters
whose degrees have a prescribed $p$-part.

\begin{conjecture}[\cite{HNT25}]\label{conj:HNT25-a} Let $\FF$ be an abelian extension of $\QQ$ and $a:=\nu_p(c(\FF))$.
Let $b\in\ZZ_{\geq 0}$. Then $\FF$ is the field of values of an
irreducible character $\chi$ of some finite group with
$\nu_p(\chi(1))=b$ if and only if
\[ \nu_p([\FF\QQ_{p^a} : \FF])\leq b.\]
\end{conjecture}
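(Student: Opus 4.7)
The ``only if'' direction falls out of an affirmative answer to Question~\ref{conj:Isaacs-Navarro24}, as already noted in the paragraph preceding the conjecture. Explicitly, let $G$ be a finite group and $\chi\in\Irr(G)$ with $\QQ(\chi)=\FF$ and $\nu_p(\chi(1))=b$, and put $a:=\nu_p(c(\chi))=\nu_p(c(\FF))$ and $P\in\Syl_p(G)$. The Isaacs--Navarro bound $[\QQ_{p^a}:\QQ_p(\chi_P)]\leq p^b$, combined with the fact that $\QQ_p(\chi_P)/\QQ(\chi_P)$ has degree dividing $[\QQ_p:\QQ]=p-1$ and is hence coprime to $p$, yields $\nu_p([\QQ_{p^a}:\QQ(\chi_P)])\leq b$. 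Since $\chi_P$ restricts $\chi$, we have $\QQ(\chi_P)\subseteq\FF$; and by \cite[Lemma~7.1]{Navarro-Tiep21}, $\QQ_p(\chi_P)\subseteq\QQ_{p^a}$, so $\QQ(\chi_P)\subseteq\FF\cap\QQ_{p^a}$. The standard identity $[\FF\QQ_{p^a}:\FF]=[\QQ_{p^a}:\FF\cap\QQ_{p^a}]$ together with the divisibility $[\QQ_{p^a}:\FF\cap\QQ_{p^a}]\mid [\QQ_{p^a}:\QQ(\chi_P)]$ closes the argument.

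For the ``if'' direction, set $b_0:=\nu_p([\FF\QQ_{p^a}:\FF])$; once a pair $(G_0,\chi_0)$ with $\QQ(\chi_0)=\FF$ and $\nu_p(\chi_0(1))=b_0$ has been produced, any $b\ge b_0$ is realized by forming the outer tensor product of $\chi_0$ with the standard rational-valued character of $\Sy_{p^{b-b_0}+1}$, which has degree $p^{b-b_0}$. Thus it suffices to realize the minimum $b=b_0$. For $b_0=0$ this is exactly the known ``if'' direction of Conjecture~\ref{conj:NT21}, so the genuinely new content is $b_0\ge 1$.

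To attack the minimum, set $\FF^+:=\FF\QQ_{p^a}$. Since $c(\FF^+)=c(\FF)$ and $\QQ_{p^a}\subseteq\FF^+$, we have $\nu_p([\FF^+\QQ_{p^a}:\FF^+])=0$, so Conjecture~\ref{conj:NT21} (which is known in this direction) applied to $\FF^+$ supplies a finite group $G'$ and $\chi'\in\Irr_{p'}(G')$ with $\QQ(\chi')=\FF^+$. Write $H:=\Gal(\FF^+/\FF)$, whose order has $p$-part exactly $p^{b_0}$. The plan is to realize $H$ as a group of outer automorphisms of $G'$ whose action on $\chi'$ coincides with the Galois action on its values, then form $G_0:=G'\rtimes H$ and $\chi_0:=\Ind_{G'}^{G_0}\chi'$. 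Since $H$ acts freely on $\QQ(\chi')=\FF^+$ (stabilizer of $\chi'$ in $H$ is trivial), a routine Clifford-theoretic argument shows $\chi_0$ is irreducible of degree $|H|\chi'(1)$, hence $\nu_p(\chi_0(1))=b_0$; and the $H$-invariance of the orbit-sum values forces $\QQ(\chi_0)\subseteq(\FF^+)^H=\FF$, with equality because any hypothetical $\tau\in\Gal(\FF^+/\QQ(\chi_0))\setminus H$ would have to permute the (full-size) $H$-orbit of $\chi'$ and thus lie in $H$, a contradiction.

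The principal obstacle is precisely that the Galois action of $H$ on $\chi'$ need not be realized by automorphisms of $G'$. If one takes $G'$ to be Fein--Gordon's cyclic-plus-Galois group, this strategy merely recovers the Fein--Gordon character for $\FF$ itself, whose degree $[\QQ_{c(\FF)}:\FF]$ may have $p$-valuation strictly larger than $b_0$: the example $\FF=\QQ(\sqrt{5})$ with $p=2$ is telling, for Fein--Gordon produces the degree-$2$ character of $D_{10}$ while $b_0=0$ and the minimum is achieved on $\Al_5$ by a character of degree $3$. Consequently a fully general proof will almost certainly require inputs beyond abelian-based constructions, most likely drawing on irreducible characters of non-abelian (quasi-)simple groups in a manner reminiscent of the existing reductions of local--global conjectures; one natural tactic is to handle the $p'$-portion of $\FF$ by Fein--Gordon and then graft in a simple-group factor absorbing the residual Galois twist without inflating the $p$-part of the degree, with the precise bookkeeping likely relying on CFSG.
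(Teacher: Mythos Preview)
The statement is labeled a \emph{conjecture} in the paper, and the paper gives no proof. The surrounding discussion records only that (i) the ``only if'' direction would follow from an affirmative answer to Question~\ref{conj:Isaacs-Navarro24} and is in fact equivalent to Conjecture~\ref{conj:HNT25}, both of which remain open; and (ii) the ``if'' direction has been resolved in \cite{HNT25}, a paper in preparation whose argument is not reproduced here. So there is no ``paper's own proof'' to compare against.

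Your treatment of the ``only if'' direction matches the paper's remark exactly: you present it as conditional on the Isaacs--Navarro bound, and the chain of containments and degree identities you use to pass from $[\QQ_{p^a}:\QQ_p(\chi_P)]\le p^b$ to $\nu_p([\FF\QQ_{p^a}:\FF])\le b$ is correct. This is not an unconditional proof, and you do not claim it is; neither does the paper.

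For the ``if'' direction, your reduction to the minimal value $b=b_0$ via an outer tensor product with a rational $\Sy_{p^{b-b_0}+1}$-character of degree $p^{b-b_0}$ is valid, and the case $b_0=0$ is indeed exactly the known direction of Conjecture~\ref{conj:NT21}. The genuine gap is the case $b_0\ge 1$: your semidirect-product strategy requires the Galois action of $H=\Gal(\FF^+/\FF)$ on $\chi'$ to be realized by automorphisms of the ambient group $G'$, and, as you yourself concede, this cannot be arranged in general from the constructions at hand. So what you have for this half is an honest sketch with a clearly flagged missing step, not a proof. Since the paper does not reproduce the construction from \cite{HNT25}, there is nothing within this paper against which to benchmark your sketch; just be aware that the ``if'' direction is asserted there as a theorem, so whatever argument closes the $b_0\ge 1$ case presumably differs from the Fein--Gordon/induction route you attempted, and your final speculation about simple-group input is not something the present paper confirms.
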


The ``if'' implication was resolved in \cite{HNT25}. Since
$[\QQ_{p^a}(\chi): \QQ_p(\chi)]$ is exactly the $p$-part of
$[\QQ_{p^a}(\chi): \QQ(\chi)]$, the ``only if'' direction is in fact
equivalent to the following.

\begin{conjecture}[\cite{HNT25}]\label{conj:HNT25} Let $\chi$ be an irreducible character
of some finite group. Assume that $c(\chi)$ is divisible by $p$.
Then
\[\chi(1) \text{ is divisible by } [\QQ_{c(\chi)_p}(\chi): \QQ_p(\chi)].\]
\end{conjecture}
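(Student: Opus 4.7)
The plan is to deduce the conjecture from a positive answer to Question~\ref{conj:Isaacs-Navarro24} of Isaacs and Navarro, and then to attack the latter via a Clifford-theoretic reduction to (almost) quasisimple groups. For the deduction, set $a := \nu_p(c(\chi))$. For any $g \in P \in \Syl_p(G)$ of order $p^k$, the value $\chi(g)$ lies in $\QQ(\chi) \cap \QQ_{p^k} \subseteq \QQ_{\gcd(c(\chi),p^k)} = \QQ_{p^{\min(a,k)}} \subseteq \QQ_{p^a}$, so $\QQ_p(\chi_P) \subseteq \QQ_{p^a} \cap \QQ_p(\chi)$. Consequently,
\[
[\QQ_{c(\chi)_p}(\chi) : \QQ_p(\chi)] \;=\; [\QQ_{p^a} : \QQ_{p^a} \cap \QQ_p(\chi)] \;\text{ divides }\; [\QQ_{p^a} : \QQ_p(\chi_P)].
\]
Since all quantities involved are $p$-powers (dividing $[\QQ_{p^a}:\QQ_p]$), a positive answer to Question~\ref{conj:Isaacs-Navarro24} would upgrade the inequality $[\QQ_{p^a}:\QQ_p(\chi_P)] \leq \chi(1)_p$ to the desired divisibility $[\QQ_{c(\chi)_p}(\chi):\QQ_p(\chi)] \mid \chi(1)$.

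The next step is to establish Question~\ref{conj:Isaacs-Navarro24} in general, by induction on $|G|$. For imprimitive $\chi = \psi^G$ with $\psi \in \Irr(H)$ and $H < G$, one should decompose $\chi_P$ via Mackey's formula as a sum of characters induced from subgroups of the form $H^{g_i} \cap P$, bound $\QQ_p(\chi_P)$ in terms of the fields of values of the summands, and then invoke the inductive hypothesis for $H$; the factor $[G:H]$ in $\chi(1)$ should supply the slack needed in the degree bound. For primitive $\chi$, we aim for a reduction to the almost quasisimple setting following the pattern of Navarro--Tiep's reduction of the ``only if'' direction of Theorem~\ref{IN01}, by passing to a minimal normal subgroup, analyzing its inertia, and twisting accordingly. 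When $G$ is $p$-solvable, Question~\ref{conj:Isaacs-Navarro24} is essentially settled in \cite{Isaacs-Navarro24} via the $p$-local invariant $\ell(\chi_P)$ and the equality $\lev(\chi) = \lev(\chi_P) = \ell(\chi_P)$; extending this equality beyond the $p$-solvable world, in the spirit of Question~\ref{conj:ell}, would be of independent value.

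The main obstacle will be the quasisimple case for groups of Lie type in non-defining characteristic, where the interplay between Deligne--Lusztig theory, the Galois action on Lusztig series, and the $p$-part of character degrees is notoriously delicate. Fortunately, the recent resolution of the Isaacs--Navarro Galois conjecture by Ruhstorfer and Schaeffer Fry (Theorem~\ref{thm:RSF}) supplies a robust $\mathcal{H}_0$-equivariant bijection between $\Irr_{p'}(G)$ and $\Irr_{p'}(\NB_G(P))$ that should be adaptable to control $\QQ_p(\chi_P)$ against $\chi(1)_p$ also for characters of non-$p'$ degree. The sporadic and alternating cases ought to yield to direct verification, in the style of \cite{BKNT24, Isaacs-Navarro24}; the bulk of the technical work is expected to lie in the Lie-type quasisimple case.
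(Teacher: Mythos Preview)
The statement you are addressing is an \emph{open conjecture}; the paper does not supply a proof, and explicitly remarks that Conjecture~\ref{conj:HNT25} ``has not yet been reduced to the simple groups.'' So there is no paper-proof to compare your proposal against, and the right question is whether your plan is a viable attack on an open problem.

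Your opening deduction (that a positive answer to Question~\ref{conj:Isaacs-Navarro24} implies Conjecture~\ref{conj:HNT25}) is correct and is already acknowledged in the paper: Question~\ref{conj:Isaacs-Navarro24} is said to imply the ``only if'' direction of Conjecture~\ref{conj:HNT25-a}, which in turn is equivalent to Conjecture~\ref{conj:HNT25}. The trouble lies entirely in your second step, where you propose to \emph{prove} Question~\ref{conj:Isaacs-Navarro24}. Two concrete problems:

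\begin{enumerate}
\item Your claim that ``when $G$ is $p$-solvable, Question~\ref{conj:Isaacs-Navarro24} is essentially settled in \cite{Isaacs-Navarro24}'' is false. What \cite{Isaacs-Navarro24} proves for $p$-solvable $G$ is Conjecture~\ref{conj:NT21-more}, i.e.\ the special case $\chi(1)_p=1$. The paper states plainly that Question~\ref{conj:Isaacs-Navarro24} ``remains open even for $p$-solvable groups.'' The equality $\lev(\chi)=\lev(\chi_P)=\ell(\chi_P)$ you invoke is established only for $p'$-degree characters; it does not handle arbitrary $\chi(1)_p$. So even the solvable base case of your induction is missing.
\item The Mackey step for imprimitive $\chi=\psi^G$ goes in the wrong direction. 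You need a \emph{lower} bound on $\QQ_p(\chi_P)$ (equivalently, an upper bound on $[\QQ_{p^a}:\QQ_p(\chi_P)]$), but writing $\chi_P$ as a sum of induced characters only gives $\QQ_p(\chi_P)\subseteq$ (compositum of fields of the summands); cancellations in the sum can make $\QQ_p(\chi_P)$ strictly smaller. There is no obvious way to extract the required lower bound from Mackey, nor to see how the factor $[G:H]$ in $\chi(1)$ compensates. Likewise, Theorem~\ref{thm:RSF} concerns only $\Irr_{p'}(G)$ and gives no direct handle on characters of positive $p$-part; asserting it ``should be adaptable'' to non-$p'$ degree is pure speculation.
\end{enumerate}

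In short, the reduction of Conjecture~\ref{conj:HNT25} to Question~\ref{conj:Isaacs-Navarro24} is fine but not new, and your proposed proof of Question~\ref{conj:Isaacs-Navarro24} rests on a mis-citation of the $p$-solvable case and on inductive steps that do not go through as stated.
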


We note that Conjecture~\ref{conj:Isaacs-Navarro24} remains open
even for $p$-solvable groups. On the other hand,
Conjecture~\ref{conj:HNT25}, although known for such groups, has not
yet been reduced to the simple groups.

The following, appeared as Condition~D in \cite{Navarro-Tiep21},
strengthens the McKay-Navarro conjecture by incorporating the fields
of values of Sylow restrictions as in
Conjecture~\ref{conj:NT21-more}.

\begin{conjecture}[Navarro-Tiep]\label{conj:Navarro-Tiep21-more}
Let $p$ be a prime, $G$ a finite group, and $P\in\Syl_p(G)$. Then
there exists an $\mathcal{H}_p$-equivariant bijection \[^*:
\Irr_{p'}(G) \rightarrow \Irr_{p'}(\bN_G(P))\] such that
\[\QQ_p(\chi_P)=\QQ_p(\chi^\ast_P)\] for all $\chi\in\Irr_{p'}(G)$.
\end{conjecture}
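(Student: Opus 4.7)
The plan is to derive Conjecture~\ref{conj:Navarro-Tiep21-more} by combining two independent ingredients: (A) the full $\mathcal{H}_p$-equivariant McKay-Navarro conjecture for the pair $(G,P)$, which produces an $\mathcal{H}_p$-equivariant bijection ${}^*\colon \Irr_{p'}(G) \to \Irr_{p'}(\bN_G(P))$; and (B) Conjecture~\ref{conj:NT21-more}, namely $\QQ_{c(\chi)_p} = \QQ_p(\chi_P)$, applied to both $G$ and $\bN_G(P)$. The reduction itself is quick: each automorphism $\sigma_\alpha$ from Section~\ref{sec:p-rationality} fixes $p'$-roots of unity and has $p$-power order, so $\sigma_\alpha \in \mathcal{H}_0 \subseteq \mathcal{H}_p$. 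Hence $\mathcal{H}_p$-equivariance forces $\sigma_\alpha$ to fix $\chi$ if and only if it fixes $\chi^*$, so $\lev(\chi) = \lev(\chi^*)$ and therefore $c(\chi)_p = c(\chi^*)_p$ for every $\chi \in \Irr_{p'}(G)$. Combined with (B), one immediately obtains
\[
\QQ_p(\chi_P) \;=\; \QQ_{c(\chi)_p} \;=\; \QQ_{c(\chi^*)_p} \;=\; \QQ_p(\chi^*_{P}).
\]

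The substantive work is then securing (A) and (B). For (B), I would invoke Theorem~\ref{thm:1implies3} and reduce to an affirmative answer to Question~\ref{conj:ell}, i.e.\ to the equality $\lev(\chi) = \ell(\chi_P)$ for $\chi \in \Irr_{p'}(G)$ with $\lev(\chi) \geq 2$. For $p$-solvable groups this follows from \cite{Isaacs-Navarro24}; in general, I would induct on $|G|$, apply Fong-Reynolds-type Clifford reductions to concentrate on a quasisimple layer, and then verify the equality for universal covers of finite simple groups using the Deligne-Lusztig parametrization of $p'$-degree characters. For (A), the standard route is via the inductive McKay-Navarro condition of Späth; for $p=2$ this is fully resolved by Ruhstorfer-Schaeffer Fry \cite{RS25a}, while for odd $p$ it remains open for several families of simple groups.

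The principal obstacle, as expected, is the quasisimple verification at odd primes, where one must simultaneously track the $\mathcal{H}_p$-Galois action and the field of values of Sylow restrictions of $p'$-degree characters. A potentially more economical approach is to bundle both requirements into a single enhanced inductive condition on quasisimple groups -- in the spirit of the reduction carried out in \cite{Navarro-Tiep21} for the field-of-values classification -- and verify it family by family. This, however, demands a realization of the McKay-Navarro bijection (via Harish-Chandra series or Jordan decomposition) that respects not merely the $\mathcal{H}_p$-action but also the restriction of character values to a Sylow $p$-subgroup, and constructing such a realization at bad primes for exceptional groups, and at primes dividing $q\pm 1$ for classical groups in non-defining characteristic, is the core technical hurdle that I expect to dominate the proof.
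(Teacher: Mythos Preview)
The statement you are attempting to prove is labeled a \emph{Conjecture} in the paper, and the paper does not supply a proof; it records Conjecture~\ref{conj:Navarro-Tiep21-more} as an open strengthening of the McKay--Navarro conjecture (see the sentence introducing it: ``The following, appeared as Condition~D in \cite{Navarro-Tiep21}, strengthens the McKay--Navarro conjecture by incorporating the fields of values of Sylow restrictions as in Conjecture~\ref{conj:NT21-more}''). There is therefore no ``paper's own proof'' to compare your proposal against.

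What you have written is not a proof but a conditional reduction: you show that Conjecture~\ref{conj:Navarro-Tiep21-more} would follow from (A) the full McKay--Navarro conjecture and (B) Conjecture~\ref{conj:NT21-more}, both of which are open for odd $p$. You are upfront about this, and indeed your final paragraphs make clear that the substantive content---verifying the inductive McKay--Navarro condition and establishing $\lev(\chi)=\ell(\chi_P)$ for quasisimple groups at odd primes---remains entirely unresolved. So as a proof, the proposal is circular: it derives one open conjecture from two others.

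One technical remark on the reduction step itself. Your argument that $\mathcal{H}_p$-equivariance forces $\lev(\chi)=\lev(\chi^*)$ appeals only to the automorphisms $\sigma_\alpha$. But $\sigma_\alpha$ fixes $\chi$ if and only if $\lev(\chi)\le\alpha$, so the $\sigma_\alpha$ alone cannot distinguish $\lev=0$ from $\lev=1$ (both are fixed by every $\sigma_\alpha$ with $\alpha\ge 1$). The conclusion $\lev(\chi)=\lev(\chi^*)$ is nonetheless correct, but it requires the full subgroup $K=\Gal(\QQ_{|G|}/\QQ_{|G|_{p'}})\subseteq\mathcal{H}_p$: one checks that $\Stab_K(\chi)=\Gal(\QQ_{|G|_p}/\QQ_{p^{\lev(\chi)}})$, whose order determines $\lev(\chi)$ uniquely. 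Relatedly, the literal equality $\QQ_{c(\chi)_p}=\QQ_p(\chi_P)$ in (B) is vacuous (and, read strictly, false) when $\lev(\chi)=0$, since then $\QQ_{c(\chi)_p}=\QQ$ while $\QQ_p(\chi_P)\supseteq\QQ_p$; the paper itself notes that Conjecture~\ref{conj:NT21-more} ``has no content if $\lev(\chi)\le 1$''. Your chain of equalities therefore needs a separate (easy) argument in the almost $p$-rational case.
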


We conclude this section with a note that the problems considered
here, as well as in Section~\ref{sec:cyclotomic-deficiency}, are
parts of a more general problem:
\begin{quote}
\emph{Determine the pairs $(d,\FF)$, where $d$ is a positive integer and
$\FF$ is an abelian number field, for which there exists a finite
group $G$ and $\chi\in\Irr(G)$ satisfying
\[
\begin{cases}
\QQ(\chi)=\FF,\\
\chi(1)=d.
\end{cases}
\]}
\end{quote}

For example, when $\FF$ is a cyclotomic field, the system has
solutions for every $d$ (see \cite[Lemma~2.1]{Hung-Tiep23}). Another
case is $d\in\{2,5\}$, where solutions exist precisely when
$[\QQ_{c(\FF)}:\FF]\in\{1,d\}$. For $d=3$, solutions occur exactly
when $[\QQ_{c(\FF)}:\FF]\in\{1,3\}$ or when $\FF=\QQ_k(\sqrt{5})$
for some $k\in\ZZ^+$ not divisible by $5$. (See
\cite[Theorem~1.1]{Hung-Tiep23} and the discussion on p.~18 of
\cite{HTZ}.) More generally, when $d$ is a prime, we show in
\cite[Theorem~D]{HTZ} that the system has solutions if and only if
$\FF=\EE \mathbb{K}$, where $\mathbb{K}$ is a cyclotomic field and
$\EE$ satisfies one of the following.
\begin{enumerate}
\item $\EE=\QQ$,

\item $[\QQ_{c(\EE)}:\EE]=d$, or

\item $\EE=\QQ(\chi)$, where $\chi$ is an irreducible character of degree $d$ of a
quasisimple group.
\end{enumerate}

If we look only at solvable groups, the answer is simple: solutions
exist if and only if $[\QQ_{c(\FF)}:\FF]$ divides $d$
(\cite[Theorem~2.2]{Hung-Tiep23}). For general groups,
Conjectures~\ref{conj:Hung-Tiep}, \ref{conj:NT21}, and
\ref{conj:HNT25-a} provide some necessary conditions for the system
to admit solutions. While a complete \emph{if and only if}
characterization appears out of reach at present, we are certain
that several additional interesting necessary conditions remain to
be discovered.


\section{Height zero characters}

In this final section, we turn to \emph{modular representation
theory} and discuss the role of the conductor in studying the values
of height-zero characters. Again, we fix a prime $p$ and a finite
group $G$.

Let $B$ be a $p$-block of $G$, and let $\Irr(B)$ denote the set of
ordinary irreducible characters in $B$. For $\chi \in \Irr(B)$, the
\emph{$p$-height} of $\chi$ is defined by
\[
\height(\chi) := \nu_p(\chi(1)) - \min_{\psi \in \Irr(B)}
\{\nu_p(\psi(1))\}.
\]
A character $\chi$ is said to have \emph{height zero} if
$\height(\chi) = 0$; in other words, its degree has the minimal
possible $p$-part among all irreducible characters in its block.

Let $D$ be a defect group of $B$. It is a certain $p$-subgroup of
$G$ of order $p^{d(B)}$, where $d(B)$ is the \emph{defect} of $B$,
defined as $d(B) := \nu_p(|G|) - \min_{\psi \in \Irr(B)}
\{\nu_p(\psi(1))\}$. Note that characters of $p'$-degree have height
zero in blocks of maximal defect, namely the defect equal to
$\nu_p(|G|)$. It is therefore reasonable to expect that some results
and conjectures on $p'$-degree characters in the previous sections
can be generalized to height zero characters.

The possible fields of values of $2$-height zero characters have
been classified by Navarro, Ruhstorfer, Tiep, and Vallejo
(\cite[Theorem~B]{NRTV24}): An abelian number field $\FF$ is the
field of values of a $2$-height zero irreducible character of some
finite group if and only if $\QQ_{c(\FF)}=\FF\QQ_{c(\FF)_{2'}}$. It
is still an open problem for general $p$.

\begin{conjecture}[Navarro-Ruhstorfer-Tiep-Vallejo]
Let $p$ be a prime and $\FF$ be an abelian extension of $\QQ$. Then
$\FF$ is the field of values of a $p$-height zero irreducible
character of some finite group if and only if
\[ [\QQ_{c(\FF)} : \FF\QQ_{c(\FF)_{p'}}] \text{ is not divisible by } p.\]
\end{conjecture}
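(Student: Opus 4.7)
The plan follows the pattern of the known case $p=2$ (\cite[Theorem~B]{NRTV24}) and of the analogous Conjecture~\ref{conj:NT21} for $p'$-degree characters, treating the two implications separately. A useful preliminary is to reformulate the hypothesis: using Galois correspondence and the fact that for $p$ odd the Sylow $p$-subgroup of $\Gal(\QQ_{c(\FF)_p}/\QQ) \cong (\ZZ/c(\FF)_p)^*$ has fixed field $\QQ_p$, the condition $p \nmid [\QQ_{c(\FF)} : \FF\QQ_{c(\FF)_{p'}}]$ is equivalent to the equality $\FF \cdot \QQ_p \cdot \QQ_{c(\FF)_{p'}} = \QQ_{c(\FF)}$, which for $p=2$ recovers the form in \cite[Theorem~B]{NRTV24}. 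This is the concrete shape of the hypothesis I would use in both directions.

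For the ``if'' direction, I would start from the Fein-Gordon construction (\cite[Theorem 8]{FG72}), which realizes $\FF$ as the field of values of an irreducible character $\chi_0$ of a solvable group $G_0$, and modify it to produce a character of height zero in its block. A plausible attempt is to form a semidirect product $G = V \rtimes G_0$ with $V$ an elementary abelian $p$-group acted on by $G_0$ in a way compatible with the Galois structure of $\FF$, then take an irreducible constituent of an induced character from a Clifford-type correspondent subgroup. The reformulation $\FF \cdot \QQ_p \cdot \QQ_{c(\FF)_{p'}} = \QQ_{c(\FF)}$ provides the ``room'' needed to adjoin $p$-th roots of unity into the field of values of the induced character without perturbing the overall Galois orbit, and should guarantee that the resulting character lies in a block of nontrivial defect group in which the $p$-part of the degree is minimal, i.e.\ the character has height zero.

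For the ``only if'' direction, I would invoke the inductive framework developed for the Alperin-McKay-Navarro conjecture to pass from $\chi \in \Irr(B)$ of height zero to a Galois-related character in the Brauer correspondent block of $\bN_G(D)$, where $D$ is a defect group of $B$. Since $D$ is normal in $\bN_G(D)$, Clifford theory together with the analysis of blocks with normal defect group should reduce the problem to a statement about $p'$-degree characters of a suitable quotient, to which Conjecture~\ref{conj:NT21} and its proof techniques can be applied to produce the desired inequality on $[\QQ_{c(\FF)}:\FF\QQ_{c(\FF)_{p'}}]$.

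The main obstacle is twofold. On the constructive side, a genuinely subtle case is when $\FF$ is realizable as a height zero field of values but \emph{not} as one for a $p'$-degree character -- for instance when $\Gal(\QQ_{c(\FF)}/\FF)$ projects nontrivially via a $p$-cycle onto both factors of $\Gal(\QQ_{c(\FF)_p}/\QQ) \times \Gal(\QQ_{c(\FF)_{p'}}/\QQ)$; the construction must then actually exploit blocks of nontrivial defect rather than reducing to the $p'$-degree situation. On the reduction side, the Galois-equivariant Alperin-McKay correspondence required is only fully proven for $p=2$ (by Ruhstorfer-Schaeffer Fry, \cite{RS25a}), so for odd $p$ one must either prove the needed Galois refinement or verify the conjecture directly for each family of simple groups. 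For classical groups of Lie type in non-defining characteristic, the latter demands a detailed analysis of Deligne-Lusztig theory and the associated Galois stabilizers, in the spirit of Navarro-Tiep's proof of Conjecture~\ref{conj:NT21}.
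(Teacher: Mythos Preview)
This statement is presented in the paper as an open \emph{conjecture}, not a theorem; the paper does not give a proof and explicitly says it ``is still an open problem for general $p$''. So there is no ``paper's own proof'' to compare against. What the paper does record is the current state of affairs: the ``if'' direction is already a theorem, proved in \cite[Theorem~5.1]{NRTV24}, while the ``only if'' direction is shown in \cite{NRTV24} to follow from the Alperin--McKay--Navarro conjecture and has been reduced to a problem on quasisimple groups.

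Your proposal is broadly compatible with this picture, and you correctly identify the real obstruction (the missing Galois-equivariant Alperin--McKay bijection for odd $p$). Two remarks are in order. First, your plan for the ``if'' direction is more tentative than it needs to be: this implication is already fully established in \cite{NRTV24}, so there is no need to speculate about a semidirect-product modification of the Fein--Gordon construction---you can simply cite that result. Second, for the ``only if'' direction your strategy (pass to $\bN_G(D)$ via a Galois-compatible Alperin--McKay correspondence, then use Clifford theory over the normal defect group to reduce to the $p'$-degree case) is exactly the route the paper attributes to \cite{NRTV24}; but as you yourself note at the end, this route is currently blocked for odd $p$ precisely because the required $\mathcal{H}_p$-equivariant bijection is not known. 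In short, your proposal is not wrong as a sketch of how one expects the conjecture will eventually be proved, but it does not and cannot constitute a proof: the conjecture remains open.
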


It was already shown in \cite[Theorem~5.1]{NRTV24} that if
$[\QQ_{c(\FF)} : \FF\QQ_{c(\FF)_{p'}}]$ is not divisible by $p$,
then $\FF$ is the field of values of a $p$-height zero character of
some finite group. In the same paper, the authors proved that the
other direction in fact follows from the statement of the
aforementioned Alperin-McKay-Navarro conjecture and reduced it to a
problem on quasisimple groups.

We have seen from the previous section that the $p$-rationality of a
$p'$-degree character can be determined from the values of the
character at $p$-elements. It is possible to capture the
$p$-rationality of a height zero character inside a local subgroup?
The following offers an answer.

\begin{conjecture}[\cite{HS25}]\label{conj:Hung-SF}
Let $p$ be a prime, $G$ a finite group, and $\chi$ be a height zero
character in a block $B$ of $G$ with $\lev(\chi)\geq 2$. Suppose
that $D$ is a defect group of $B$. Then
\[\lev(\chi)=\lev(\chi_{\bN_G(D)}).\]
\end{conjecture}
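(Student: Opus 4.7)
The plan is to generalize the strategy of Theorem~\ref{thm:1implies3} from the $p'$-degree setting to the height-zero setting inside a $p$-block. The inequality $\lev(\chi_{\bN_G(D)}) \le \lev(\chi)$ is immediate, since $\QQ(\chi_{\bN_G(D)}) \subseteq \QQ(\chi)$ forces the conductor of the restriction to divide that of $\chi$. Setting $\alpha := \lev(\chi) \ge 2$, the task reduces to showing that every $\tau \in \Gal(\QQ_{p^\alpha}/\QQ_p(\chi_{\bN_G(D)}))$ is trivial. Since $\chi$ itself has level $\alpha$, an application of \cite[Lemma~7.1]{Navarro-Tiep21} to $\chi_{\bN_G(D)}$ gives $\QQ_p(\chi_{\bN_G(D)}) \subseteq \QQ_{p^\alpha}$, so any such $\tau$ has $p$-power order.

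The crux is to produce inside $\chi_{\bN_G(D)}$ a collection of height-zero constituents of $p$-rationality level exactly $\alpha$ lying in a distinguished block of $\bN_G(D)$, whose total multiplicity is coprime to $p$. Let $b$ be the Brauer correspondent of $B$ in $\bN_G(D)$, write $\Irr_0(b):=\{\psi\in\Irr(b):\height(\psi)=0\}$, and define
\[
\Delta^{b}_i(\Psi) := \sum_{\substack{\psi\,\in\,\Irr_0(b)\\ \lev(\psi)\,=\,i}} [\Psi,\psi]\,\psi,
\qquad
\ell^{b}(\Psi) := \max\bigl\{i\ge 0 : \nu_p\bigl(\Delta^{b}_i(\Psi)(1)\bigr) = \nu_p(|\bN_G(D)|) - \nu_p(|D|)\bigr\}.
\]
The goal is then to prove the height-zero analog $\lev(\chi) = \ell^{b}(\chi_{\bN_G(D)})$. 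Once this is in hand, $\tau$ permutes the height-zero constituents of $\chi_{\bN_G(D)}$ in $b$ of level $\alpha$; coprimality of their multiplicity count modulo $p$ forces at least one to be $\tau$-fixed; and since any such $\psi$ has level exactly $\alpha$, the only $\tau$-action on $\QQ_{p^\alpha}$ compatible with fixing $\psi$ is the identity, which finishes the argument.

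To establish $\lev(\chi) = \ell^{b}(\chi_{\bN_G(D)})$, I would first handle the $p$-solvable case using a combination of the Fong--Reynolds reduction, Gajendragadkar--Isaacs's theory of $\pi$-special characters (as exploited in \cite{Isaacs-Navarro24}), and the height-zero analogs of \cite[Lemmas~4.2 and~4.3]{HS25}. For the general case, the natural ingredient is a Galois-equivariant Alperin--McKay correspondence $\Irr_0(B) \to \Irr_0(b)$; given such a bijection, one would match level-$\alpha$ height-zero characters of $B$ with those of $b$ and then control the multiplicities inside $\chi_{\bN_G(D)}$ via a Schur-orthogonality computation analogous to that in the proof of Theorem~\ref{thm:1implies3}.

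The main obstacle is the absence of a proved Galois-equivariant block-wise correspondence for odd primes: Ruhstorfer--Schaeffer Fry's recent work \cite{RS25,RS25a} resolves the non-blockwise Galois--McKay statement, but the block-wise refinement needed here is open outside $p=2$. A secondary difficulty is that, even after securing the correspondence, bounding the multiplicity $[\chi_{\bN_G(D)}, \psi]$ below for specific $\psi \in \Irr_0(b)$ of level $\alpha$ is delicate: in the $p'$-degree case one exploited linear constituents of a Sylow restriction, where multiplicities are encoded in the abelianization $P/P'$; for height-zero characters no such linear substitute is available, and a finer Clifford-theoretic analysis of the passage from $\chi$ to $\chi_{\bN_G(D)}$ will be required.
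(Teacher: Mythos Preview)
The statement you are attempting to prove is a \emph{conjecture} in the paper, not a theorem: the paper offers no proof, only a reference to \cite{HS25} for partial results (notably the cyclic-defect case). There is therefore no proof in the paper to compare your attempt against.

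Your proposal is not a proof either, and you are candid about this. Two points deserve emphasis. First, the main ingredient you invoke for the general case---a Galois-equivariant Alperin--McKay bijection $\Irr_0(B)\to\Irr_0(b)$---is exactly the Alperin--McKay--Navarro conjecture, which is open for odd $p$. Assuming it would make your argument circular in spirit, since the paper itself presents Conjecture~\ref{conj:Hung-SF} alongside (and in some sense parallel to) that conjecture; see the final conjecture of Section~6, which explicitly combines the two. Second, even granting such a bijection, it is unclear how you would extract the multiplicity information you need: the bijection matches \emph{characters}, not constituents of restrictions, and says nothing about $[\chi_{\bN_G(D)},\psi]$ for individual $\psi\in\Irr_0(b)$. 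In the $p'$-degree case treated in Theorem~\ref{thm:1implies3}, the multiplicity count worked because one was restricting to a Sylow $p$-subgroup and counting \emph{linear} constituents; here you restrict to $\bN_G(D)$, which is not a $p$-group, and the relevant constituents need not be linear. Your ``secondary difficulty'' is in fact the heart of the matter, and no mechanism is proposed to overcome it.

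In short: the statement is open, the paper does not claim a proof, and your outline---while a reasonable shape for an attack---rests on another open conjecture and leaves the essential multiplicity step unaddressed.
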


This was proposed in \cite{HS25}, where several supporting cases
were presented. In particular, it was shown to hold for characters
in blocks with cyclic defect. In fact, the result in the
cyclic-defect case is notably stronger. The potential discrepancy
$\lev(\chi)=1$ and $\lev(\chi_{\mathbf{N}_G(D)})=0$ is possible in
general but cannot occur when $D$ is cyclic. Second, again when $D$
is cyclic, if an element of $G$ captures the $p$-rationality of
$\chi$, it must lie insider the normalizer $\bN_G(D)$ up to
$G$-conjugation (see \cite[\S3]{HS25}). We do not know if this is
true for arbitrary defect.

Conjecture~\ref{conj:Hung-SF} has several interesting consequences
(see \cite[\S7]{HS25}). We mention one of them here. The following
should be compared with Conjecture~\ref{conj:NT21-more-more}.

\begin{conjecture}[\cite{HS25}]
Let $\chi$ be a height-zero character of a finite group $G$ and $D$
a defect group of the $p$-block of $G$ containing $\chi$. Then
$\chi$ is almost $p$-rational if and only if $\chi_{\bN_G(D)}$ is
almost $p$-rational. In particular, $\chi$ is $2$-rational if and
only if $\chi_{\bN_G(D)}$ is $2$-rational.
\end{conjecture}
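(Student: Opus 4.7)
The plan is to deduce this statement directly from Conjecture~\ref{conj:Hung-SF}, treating it as an immediate corollary. The only substantive content needed beyond that conjecture is a parity observation about conductors at the prime $2$.

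First I would dispose of the forward implication, which is essentially formal: since $\bN_G(D)$ is a subgroup of $G$, every value of $\chi_{\bN_G(D)}$ is also a value of $\chi$, so $\QQ(\chi_{\bN_G(D)}) \subseteq \QQ(\chi)$ and hence $c(\chi_{\bN_G(D)})$ divides $c(\chi)$. In particular $\lev(\chi_{\bN_G(D)}) \leq \lev(\chi)$, so if $\chi$ is almost $p$-rational (i.e.\ $\lev(\chi) \leq 1$), then so is $\chi_{\bN_G(D)}$.

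For the backward implication, suppose $\chi_{\bN_G(D)}$ is almost $p$-rational but, for contradiction, $\lev(\chi) \geq 2$. Since $\chi$ has height zero and $\lev(\chi) \geq 2$, Conjecture~\ref{conj:Hung-SF} applies and yields $\lev(\chi) = \lev(\chi_{\bN_G(D)}) \leq 1$, a contradiction. Therefore $\lev(\chi) \leq 1$, and $\chi$ is almost $p$-rational. The ``in particular'' clause at $p=2$ then follows with no further work: because $\QQ_{2m} = \QQ_m$ for every odd $m$, no abelian number field has conductor exactly divisible by $2$, and consequently $\lev(\chi)$ at $p=2$ can never equal $1$. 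Thus at $p=2$ the notions of ``almost $2$-rational'' and ``$2$-rational'' coincide, and the $2$-rationality biconditional is a restatement of the one already proved.

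The main obstacle is of course Conjecture~\ref{conj:Hung-SF} itself, which is currently known only in restricted cases, most notably when the defect group $D$ is cyclic, by \cite{HS25}. Absent that conjecture, the statement would have to be approached directly, presumably via the same program underlying the Navarro--Tiep work on $p'$-degree characters (see Conjecture~\ref{conj:NT21-more}): a reduction to quasisimple groups, followed by a case-by-case control of the Galois action on height-zero characters under restriction to $\bN_G(D)$. This reduction is precisely the hard part and appears to require substantial new input beyond what is available in the current literature.
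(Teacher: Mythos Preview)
Your proposal is correct and matches the paper's own treatment: the paper explicitly presents this conjecture as one of the ``several interesting consequences'' of Conjecture~\ref{conj:Hung-SF} (referring to \cite[\S7]{HS25} for details), and your deduction---the trivial forward direction via $\QQ(\chi_{\bN_G(D)})\subseteq\QQ(\chi)$, the contrapositive using Conjecture~\ref{conj:Hung-SF} for the backward direction, and the observation that conductors are never $\equiv 2\pmod 4$ to collapse ``almost $2$-rational'' to ``$2$-rational''---is exactly the intended argument.
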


Speaking of almost $p$-rational characters, we note here a related
question connected to Brauer's Problem 12 (see the discussion before
Theorem~\ref{IN01}).

\begin{question}[\cite{Hung-Malle-Maroti21}, Question~1.5]
Let $G$ be a finite group and $p$ a prime dividing $|G|$. Is it true
that Sylow $p$-subgroups of $G$ are cyclic if and only if the number
of $p'$-degree almost $p$-rational irreducible characters in the
principal block of $G$ is of the form $e+\frac{p-1}{e}$ for some
divisor $e\in\ZZ^+$ of $p-1$?
\end{question}

Can Conjecture~\ref{conj:Hung-SF} be generalized to characters of
positive height? Remarkably, the only counterexamples we have found
to the inequality $\lev(\chi)-\lev(\chi_{\bN_G(D)})\leq
\height(\chi)$ are those with $\lev(\chi)=1$,
$\lev(\chi_{\bN_G(D)})=0$, and $\height(\chi)=0$.

Finally, in the spirit of Conjecture~\ref{conj:Navarro-Tiep21-more},
one can combine the Alperin-McKay-Navarro conjecture and
Conjecture~\ref{conj:Hung-SF} to obtain the following. (Recall that
$\mathcal{H}_p$ is the subgroup of $\Gal(\QQ_{|G|}/\QQ)$ consisting
of the Galois automorphisms that send every root of unity
$\zeta\in\QQ_{|G|}$ of order not divisible by $p$ to $\zeta^{q}$,
where $q$ is a certain fixed power of $p$. Also, if $B$ is a
$p$-block of a finite group $G$, then $\Irr_0(B)$ denotes the subset
of $\Irr(B)$ of all the height zero characters of $B$.)

\begin{conjecture}[\cite{HS25}]
Let $p$ be a prime and $G$ a finite group. Let $B$ be a $p$-block of
$G$ with defect group $D$ and $b$, a block of $\bN_G(D)$, be its
Brauer correspondent. Let $\mathcal{H}_B$ be the subgroup of
$\mathcal{H}_p$ fixing $B$. Then there exists an
$\mathcal{H}_B$-equivariant bijection \[^*: \Irr_0(B) \rightarrow
\Irr_0(b)\] such that \[\lev(\chi_{\bN_G(D)})=\lev(\chi^\ast)\] for
every $\chi\in\Irr_0(B)$ of $p$-rationality level at least $2$.
\end{conjecture}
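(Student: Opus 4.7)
The strategy is to derive the conjecture by combining the (block-wise) Alperin-McKay-Navarro conjecture with Conjecture~\ref{conj:Hung-SF}, using the fact that for every $\alpha \in \ZZ^+$ the Galois automorphism $\sigma_\alpha$ detects the $p$-rationality level and lies inside $\mathcal{H}_B$ for \emph{every} $p$-block $B$ of $G$.

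First, I would invoke the Alperin-McKay-Navarro conjecture to produce an $\mathcal{H}_B$-equivariant bijection $^* \colon \Irr_0(B) \to \Irr_0(b)$. Next, I would verify that $\sigma_\alpha \in \mathcal{H}_B$ for every $\alpha \in \ZZ^+$: indeed $\sigma_\alpha$ fixes all $p'$-roots of unity, so it belongs to $\mathcal{H}_p$, and it acts trivially on every central character of $G$ since central characters are determined by their values on $p$-regular classes; consequently $\sigma_\alpha$ stabilizes every $p$-block of $G$. Now take $\chi \in \Irr_0(B)$ with $\alpha := \lev(\chi) \geq 2$. The characterization of level via the $\sigma_\alpha$, recalled in Section~\ref{sec:p-rationality}, gives that $\chi$ is $\sigma_\alpha$-invariant but not $\sigma_{\alpha-1}$-invariant. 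By $\mathcal{H}_B$-equivariance the same is true of $\chi^*$, which forces $\chi^*$ to be non-$p$-rational and yields $\lev(\chi^*) = \alpha$. Finally, Conjecture~\ref{conj:Hung-SF} gives $\lev(\chi_{\bN_G(D)}) = \alpha$, and combining these two equalities delivers $\lev(\chi_{\bN_G(D)}) = \lev(\chi^*)$, as required.

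The main obstacle is that both inputs of this reduction are themselves open in general. The Alperin-McKay-Navarro conjecture is established for $p = 2$ in \cite{RS25a} but remains open for odd primes, and Conjecture~\ref{conj:Hung-SF} is currently known only in restricted settings, such as blocks of cyclic defect \cite{HS25}. An unconditional proof will therefore almost certainly have to proceed via a reduction to quasisimple groups, in the spirit of the inductive conditions of Sp\"ath and of Boltje-Kleshchev-Navarro-Tiep: one would package a Galois-equivariant Alperin-McKay bijection together with the local detection of level into a single inductive condition on covering groups of finite simple groups, and then verify it case by case. The most delicate step is expected to be for groups of Lie type in non-defining characteristic, where tracking the combined action of $\mathcal{H}_B$ and of the defect-group normalizer on Jordan decomposition data is notoriously subtle.
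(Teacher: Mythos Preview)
The statement is a \emph{conjecture}, and the paper does not supply a proof; it only remarks, in the sentence introducing it, that ``one can combine the Alperin--McKay--Navarro conjecture and Conjecture~\ref{conj:Hung-SF} to obtain the following.'' Your proposal is precisely a fleshed-out version of that one-line remark: you assume AMN to get the $\mathcal{H}_B$-equivariant bijection, observe that each $\sigma_\alpha$ lies in $\mathcal{H}_B$ (since it fixes $p'$-roots of unity and hence fixes values on $p$-regular classes, so preserves every $p$-block), use the level characterization via $\sigma_\alpha$-invariance to transport $\lev(\chi)$ to $\lev(\chi^*)$, and then invoke Conjecture~\ref{conj:Hung-SF} to match $\lev(\chi)$ with $\lev(\chi_{\bN_G(D)})$. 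This is exactly the intended derivation, and your identification of the obstacles (both inputs are open for odd $p$) is accurate.

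One minor phrasing issue: you write that ``central characters are determined by their values on $p$-regular classes.'' That is not literally true; what is true, and what you need, is that the \emph{block} containing $\chi$ is determined by the central character values on $p$-regular class sums (modulo the maximal ideal), so any Galois automorphism fixing all $p'$-roots of unity permutes $\Irr(G)$ block-by-block. Also note that the level characterization via $\sigma_\alpha$ is stated in the paper only for $\chi\in\Irr_{p'}(G)$, but it holds for any irreducible character (the argument is purely about fields of values), so your application to height-zero characters in an arbitrary block is legitimate.
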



\end{document}